\renewcommand\eqref[1]{(\ref{#1})} %Need with hyperref
 \newtheorem{thm}{Theorem}[section]
 \newtheorem{cor}[thm]{Corollary}
 \newtheorem{lem}[thm]{Lemma}
 \newtheorem{prop}[thm]{Proposition}
 \theoremstyle{definition}
 \newtheorem{defn}[thm]{Definition}
 \theoremstyle{remark}
 \newtheorem{rem}[thm]{Remark}
 \numberwithin{equation}{section}
\newcommand{\half}{\frac{1}{2}}
\newcommand{\ene}{\mathbb{N}}
\def\SU2{{{\rm SU(2)}}}
\newcommand{\er}{\mathbb{R}}
\newcommand{\ce}{\mathbb{C}}
\newcommand{\tn}{\mathbb{T}^n}
\newcommand{\ern}{{\mathbb{R}}^n}
\newcommand{\bi}{\begin{itemize}}
\newcommand{\ei}{\end{itemize}}
\newcommand{\be}{\begin{enumerate}}
\newcommand{\ee}{\end{enumerate}}
\newcommand{\beq}{\begin{equation}}
\newcommand{\eq}{\end{equation}}
\newcommand{\lap}{\mathcal{L}_G}
\newcommand{\lapm}{\Delta_M}
\newcommand{\cdxi}{\ce^{d_{\xi}\times d_{\xi}}}
\def\SO3{{{\rm SO(3)}}}
\newcommand{\cinfm}{\mathcal{C}^{\infty}(M)}
\def\p#1{{\left({#1}\right)}}
\def\jp#1{{\left\langle{#1}\right\rangle}}
\def\Op{{{\rm Op}}}
\DeclareMathOperator{\Tr}{Tr}
\def\Gh{{\widehat{G}}}
\def\HS{{\mathtt{HS}}}
\def\Rn{{{\mathbb R}^n}}
\def\Tn{{{\mathbb T}^n}}
\def\T{{{\mathbb T}^1}}
\def\TT{{{\mathbb T}^2}}
\def\N{{{\mathbb N}}}
\def\C{{{\mathbb C}}}
\def\SU2{{{\rm SU(2)}}}
\def\lapsu2{{{\mathcal L}_\SU2}}
\def\va{\varphi}
\def\Op{\text{\rm Op}}
\newcommand{\subll}{\mathcal{L}_{sub}}
\begin{document}

%---------------------------------------------------------------------------
%Insert here the title, affiliations and abstract:
%
\title[Schatten classes on compact manifolds]
 {Schatten classes on compact manifolds: Kernel conditions}

%----------Author 1
\author[Julio Delgado]{Julio Delgado}

\address{%
Department of Mathematics\\
Imperial College London\\
180 Queen's Gate, London SW7 2AZ\\
United Kingdom
}

\email{j.delgado@imperial.ac.uk}

\thanks{The first author was supported by a Marie Curie International Incoming
Fellowship within the 7th European Community Framework Programme
under grant  PIIF-GA-2011-301599. The second author was supported by EPSRC Leadership Fellowship EP/G007233/1 and EPSRC grant EP/K039407/1. }
%----------Author 2
\author[Michael Ruzhansky]{Michael Ruzhansky}

\address{%
Department of Mathematics\\
Imperial College London\\
180 Queen's Gate, London SW7 2AZ\\
United Kingdom
}

\email{m.ruzhansky@imperial.ac.uk}

%----------classification, keywords, date
\subjclass{Primary 47G10; Secondary 47B10, 22E30.}

\keywords{Fourier series, compact manifolds, compact Lie groups, pseudo-differential operators, Schatten classes, trace formula. }

\date{\today}
%----------additions
%\dedicatory{To my boss}
%%% ----------------------------------------------------------------------
\begin{abstract}
In this paper we give criteria on integral  kernels ensuring that integral operators on compact manifolds
belong to Schatten classes. 
 A specific test for nuclearity is established as well as the corresponding trace formulae. 
 In the special case of compact Lie groups, kernel
 criteria in terms of (locally and globally) hypoelliptic operators are also given.  
\end{abstract}

%%% ----------------------------------------------------------------------
\maketitle
%%% ----------------------------------------------------------------------
%\tableofcontents
\section{Introduction}
Given a closed smooth manifold $M$ (smooth manifold without boundary)
endowed with a positive measure $dx$, in this paper we give 
sufficient conditions on Schwartz integral kernels in order to ensure that 
the corresponding integral operators belong to
different Schatten classes. The problem of finding such criteria on different kinds of domains is classical and has been much studied, e.g. the paper \cite{bs:sing} by Birman and Solomyak is a good introduction to the subject. In particular, it is well known that the smoothness of the kernel is related to the behaviour of the singular numbers. 
 
In this paper we present criteria for Schatten classes and, in particular, for the trace class 
operators on compact smooth manifolds without boundary. Compact Lie groups will also be considered as a special case since then additional results can be obtained, also
allowing criteria in terms of hypoelliptic operators such as the sub-Laplacian. 
The sufficient conditions on 
integral kernels $K(x,y)$ for Schatten classes will require regularity of a certain order in
either $x$ or $y$, or both. 

We note that already some results of Birman and Solomyak
 \cite{bs:sing} can be extended to compact manifolds but our approach allows one to
be flexible about sets of variables in which one imposes the regularity of the kernel.

In order to obtain criteria for general Schatten classes we will use the well-known method of factorisation, particularly in the way applied by O'Brien in \cite{OBrien:test}.  For applications to trace formulas of Schr{\"o}dinger operators see also \cite{OBrien:test2}.
%  The problem of finding scriteria for traceability on compact manifolds is directly related to the one of finding regularity conditions on functions in order to ensure the absolutely convergence of its corresponding Fourier coefficients associated to an elliptic operator(cf. \cite{tay:fc}). In the case of the circle, the latter goes back to the classical problem of  the convergence of Fourier series \cite{}. 
 
Schatten classes of pseudo-differential operators in the setting of the Weyl-H\"or\-man\-der calculus have been considered in 
\cite{Toft:Schatten-AGAG-2006}, \cite{Toft:Schatten-modulation-2008}, \cite{Buzano-Toft:Schatten-Weyl-JFA-2010}. Schatten classes on compact Lie groups and $s$-nuclear operators on $L^p$ spaces from the point of view of symbols have been respectively studied by the authors in \cite{dr13:schatten} and \cite{dr13a:nuclp}. In the subsequent part of the 
present paper we establish the
characterisation of Schatten classes on closed manifolds in terms of symbols that
we will introduce for this purpose.

In his classical book (cf. \cite{sugiura:book},  Prop 3.5, page 174) Mitsuo Sugiura gives a trace class criterion for integral operators on $L^2(\T)$ with $C^2$-kernels. More precisely, the theorem asserts that every kernel in $C^2(\TT)$ begets a trace class operator on $L^2(\T)$: if $K(\theta,\phi)$ is a $C^2$-function on $\TT$, then the integral operator $L$ on 
 $L^2(\T)$ defined by
\begin{equation}\label{EQ:Sugiura-trace-op} 
 (Lf)(\theta)=\int\limits_0^{2\pi}K(\theta,\phi)f(\phi)d\phi,
\end{equation}
is trace class and has the trace
\begin{equation}\label{EQ:Sugiura-trace}
\Tr(L)=\frac{1}{2\pi}\int\limits_0^{2\pi}K(\theta,\theta)d\theta.
\end{equation}
The proof of this result relies on the connection between the absolute convergence of Fourier coefficients of the kernel and the trace class property (traceability) of the corresponding operator.

However, in this paper we show that such type of results can be significantly improved by 
using a different approach.
Associating a discrete Fourier Analysis to an elliptic operator on a  compact manifold, we will establish the aforementioned connection in the setting of general closed manifolds, also weakening the known
assumptions on the kernel for the operator to be trace class and for the trace formula
\eqref{EQ:Sugiura-trace} to hold. 
Thus, in this respect, a direct extension of the method employed by Sugiura
leads to weaker results than our approach, for closed manifolds of dimension higher than $2$, and we discuss this 
at the end of Section \ref{SEC:trace-class}.
 
%for compact Lie groups  and in (\cite{dr:manifolds}) for compact manifolds.   
      
To formulate the notions more precisely, let $H$ be a complex Hilbert space endowed with an inner product denoted 
by $\p{\cdot,\cdot}$, and let $T:H\rightarrow H$ be a linear compact operator. If we denote by $T^*:H\rightarrow H$ the adjoint of  $T$, then the linear operator $(T^*T)^\half:H\rightarrow H$ is positive and compact. Let $(\psi_k)_k$ be an orthonormal basis for $H$ consisting of eigenvectors of $|T|=(T^*T)^\half$, and let $s_k(T)$ be the eigenvalue corresponding to the eigenvector 
$\psi_k$, $k=1,2,\dots$. The non-negative numbers $s_k(T)$, $k=1,2,\dots$, are called the singular values of $T:H\rightarrow H$. 
If 
$$
\sum_{k=1}^{\infty} s_k(T)<\infty ,
$$
then the linear operator $T:H\rightarrow H$ is said to be in the {\em trace class} $S_1$. It can be shown that  $S_1(H)$ is a Banach space in which the norm $\|\cdot\|_{S_1}$ is given by 
$$
\|T\|_{S_1}= \sum_{k=1}^{\infty} s_k(T),\,T\in S_1,
$$
multiplicities counted.
Let $T:H\rightarrow H$ be an operator in $S_1(H)$ and let  $(\phi_k)_k$ be any orthonormal basis for $H$. Then, the series $\sum\limits_{k=1}^{\infty} \p{T\phi_k,\phi_k}$   is absolutely convergent and the sum is independent of the choice of the orthonormal basis $(\phi_k)_k$. Thus, we can define the trace $\Tr(T)$ of any linear operator
$T:H\rightarrow H$ in $S_1$ by 
$$
\Tr(T):=\sum_{k=1}^{\infty}\p{T\phi_k,\phi_k},
$$
where $\{\phi_k: k=1,2,\dots\}$ is any orthonormal basis for $H$. If the singular values
are square-summable $T$ is called a {\em Hilbert-Schmidt} operator. It is clear that every trace class operator is a Hilbert-Schmidt operator. More generally, if $0<p<\infty$ and the sequence of singular values is $p$-summable, then $T$ 
is said to belong to the Schatten class  ${S}_p(H)$, and it is well known that each ${S}_p(H)$ is an ideal in $\mathcal{L}(H)$. If $1\leq p <\infty$, a norm is associated to ${S}_p(H)$ by
 \[
 \|T\|_{S_p}=\left(\sum\limits_{k=1}^{\infty}(s_k(T))^p\right)^{\frac{1}{p}}.
 \] 
If $1\leq p<\infty$ 
 the class $S_p(H)$ becomes a Banach space endowed by the norm $\|T\|_{S_p}$. If $p=\infty$ we define $S_{\infty}(H)$ as the class of bounded linear operators on $H$, with 
$\|T\|_{S_\infty}:=\|T\|_{op}$, the operator norm.
In the case $0<p<1$  the quantity $\|T\|_{S_p}$ only defines
 a  quasinorm, and $S_p(H)$ is also complete.

The Schatten classes are nested, with
\begin{equation}\label{EQ:Sch-nested}
{S}_p\subset {S}_q,\,\,\textrm{ if }\,\, 0<p<q\leq\infty,
\end{equation}
and satisfy the important multiplication property (cf. \cite{horn}, \cite{sim:trace}, \cite{gokr})
\beq\label{inn}{S}_p{S}_q\subset S_r,\eq
where
\[\frac{1}{r}=\frac{1}{p}+\frac{1}{q},\qquad 0<p<q\leq\infty.\]
We will apply (\ref{inn}) for factorising our operators $T$ in the form $T=AB$ with $A\in S_p$ and $B\in S_q$, 
and from this we deduce that $T\in S_r$. 

A nice basic introduction to the study of the trace class is included in the book \cite{lax:fa} by Peter Lax. For the  basic theory of Schatten classes we refer the reader to \cite{gokr}, \cite{r-s:mp}, \cite{sim:trace}, \cite{sch:id}. 
 
In this paper we consider integral operators which is not restrictive in view of
the Schwartz integral kernel theorem on closed manifolds. 
If $H=L^2({\Omega},{\mathcal{M}},\mu)$, it is well known that $T$ is a Hilbert-Schmidt operator if and only if  $T$ can be represented by an integral kernel $K=K(x,y)\in L^2(\Omega\times\Omega,\mu\otimes\mu)$. In this paper we are interested in the case when $\Omega$ is a closed manifold (which we denote by $M$).
In particular, we note that in view of \eqref{EQ:Sch-nested} the condition 
$K\in L^2(M\times M)$ implies that $T\in S_p$ for all $p\geq 2$.

For $p<2$, the situation is much more subtle, and
the Schatten classes $S_p(L^2)$ cannot be characterised as in the case $p=2$ by a property analogous to the square integrability of integral kernels. This is a crucial fact that we now briefly describe. A classical result of 
Carleman \cite{car:ex} from 1916 gives the construction of a periodic {\em continuous} function 
$\varkappa(x)=\sum\limits_{k=-\infty}^{\infty}c_k e^{2\pi ikx}$ for which the Fourier coefficients $c_k$ satisfy
\begin{equation}\label{EQ:Carleman}
\sum\limits_{k=-\infty}^{\infty} |c_k|^p=\infty\qquad \textrm{ for any } p<2.
\end{equation}
Now, using this and considering the normal operator 
\begin{equation}\label{EQ:Carleman2}
Tf=f*\varkappa
\end{equation}
acting on $L^2(\T)$ 
one obtains that the sequence $(c_k)_k$ forms a complete system of eigenvalues of 
this operator corresponding to the complete orthonormal system $\phi_k(x)=e^{2\pi ikx}$, 
$T\phi_k=c_k\phi_k$. The system $\phi_k$ is also complete for $T^*$, $T^*\phi_k=\overline{c_k}\phi_k$, 
so that the singular values of $T$ are given by $ s_k(T)=|c_k|$, and
hence by \eqref{EQ:Carleman} we have
$$\sum\limits_{k=-\infty}^{\infty}s_k(T)^p=\infty \qquad \textrm{ for any } p<2.$$   
 In other words, in contrast to the case of the class ${S}_2$ of Hilbert-Schmidt operators which is characterised by the square integrability of the kernel, Carleman's result shows that below the index $p=2$ the class of kernels generating operators in the Schatten class $S_p$  cannot be characterised by criteria of  the type $ \iint |K(x,y)|^\alpha dxdy<\infty ,$
 since the kernel $K(x,y)=\varkappa (x-y)$ of the operator $T$ in
 \eqref{EQ:Carleman2} satisfies any kind of integral condition of such form 
 due to the boundedness of $\varkappa$. 

This example demonstrates the relevance of obtaining criteria for operators to belong to Schatten classes
 for $p<2$ and, in particular, motivates the results in this paper.  Among other things,
 we may also note that the continuity of the kernel (as in the above example)
 also does not guarantee that the operator would belong to any of the Schatten
 classes $S_p$ with $p<2$. Therefore, it is natural to ask what regularity imposed on the
 kernel would guarantee such inclusions (for example, the $C^2$ condition in Sugiura's
 result mentioned earlier does imply the traceability on $\mathbb T^1$). Thus, these
 questions will be the main interest of the present paper.

As for criteria for operators to belong to Schatten classes $S_{p}$ for $0<p<2$, a simplified
version of our results for kernels in Sobolev spaces can be stated as follows:

\begin{thm}\label{THM:simplified}
Let $M$ be a closed manifold of dimension $n$. 
Let $K\in H^{\mu}(M\times M)$ for some $\mu>0$. Then
the integral operator $T$ on 
 $L^2(M)$, defined by
 \[(Tf)(x)=\int\limits_{M} K(x,y)f(y)dy,\]
is in the Schatten classes $S_p(L^2(M))$ for $p>\frac{2n}{n+2\mu}$.
In particular, if $\mu>\frac n2$, then $T$ is trace class.
\end{thm}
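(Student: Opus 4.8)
The plan is to use a fixed elliptic operator on $M$ to factorise $T$ through the Hilbert--Schmidt class, and then to read off the Schatten index from the multiplication property \eqref{inn}. Let $\lapm$ denote the nonnegative Laplace--Beltrami operator associated to $(M,dx)$, and let $0\le\lambda_1\le\lambda_2\le\cdots$ be its eigenvalues, repeated according to multiplicity. By Weyl's asymptotic law one has $\lambda_j\asymp j^{2/n}$ as $j\to\infty$, so that $(1+\lambda_j)^{-\mu/2}\asymp j^{-\mu/n}$, and therefore
\[
(1+\lapm)^{-\mu/2}\in S_q \quad\Longleftrightarrow\quad \sum_{j}(1+\lambda_j)^{-\mu q/2}<\infty \quad\Longleftrightarrow\quad \mu q>n .
\]
In particular $(1+\lapm)^{-\mu/2}\in S_q$ for every $q$ with $\frac1q<\frac\mu n$.

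Next I would write $T=(1+\lapm)^{-\mu/2}A$, where $A:=(1+\lapm)^{\mu/2}T$, the factor $(1+\lapm)^{\mu/2}$ acting in the $x$-variable. Then $A$ has Schwartz kernel $\p{1+(\lapm)_x}^{\mu/2}K(x,y)$, and the crucial step is to show that this kernel lies in $L^2(M\times M)$, i.e. that $A\in S_2$. Since $(\lapm)_x$ and $(\lapm)_y$ are commuting nonnegative self-adjoint operators on $L^2(M\times M)$, they admit a joint spectral decomposition; on the joint spectral subspace with eigenvalues $a\ge0$ and $b\ge0$ one has $(1+a)^{\mu/2}\le(1+a+b)^{\mu/2}$, because $t\mapsto t^{\mu/2}$ is increasing for $\mu>0$. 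Expanding $K$ along this decomposition and summing therefore gives
\[
\n{\p{1+(\lapm)_x}^{\mu/2}K}_{L^2(M\times M)}\le\n{\p{1+\lapmxy}^{\mu/2}K}_{L^2(M\times M)}=\n{K}_{H^{\mu}(M\times M)}<\infty,
\]
the last equality being the definition of the Sobolev norm on the product via its Laplacian $\lappm=\lapmxy$. Hence $A$ is Hilbert--Schmidt. By symmetry one could just as well place the regularity on the $y$-variable, or split it between the two, which is the source of the flexibility announced in the introduction.

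Finally, combining the two factors through \eqref{inn}, for any $p$ with $\frac{2n}{n+2\mu}<p<2$ I would choose $q$ with $\frac1q=\frac1p-\frac12$; this is positive and satisfies $\frac1q<\frac\mu n$ precisely because $\frac1p<\frac12+\frac\mu n=\frac{n+2\mu}{2n}$, so that $(1+\lapm)^{-\mu/2}\in S_q$ and
\[
T=(1+\lapm)^{-\mu/2}A\in S_qS_2\subset S_p,\qquad \frac1p=\frac1q+\frac12 .
\]
For $p\ge2$ there is nothing to prove, since $K\in H^{\mu}(M\times M)\subset L^2(M\times M)$ already forces $T\in S_2\subset S_p$ by the nestedness \eqref{EQ:Sch-nested}. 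This establishes $T\in S_p$ for all $p>\frac{2n}{n+2\mu}$, and when $\mu>\frac n2$ one has $\frac{2n}{n+2\mu}<1$, so $p=1$ is admissible and $T$ is trace class.

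The step I expect to be most delicate is not the index bookkeeping but the identification of the kernel of $(1+\lapm)^{\mu/2}T$ with $\p{1+(\lapm)_x}^{\mu/2}K$ together with the spectral comparison above, since $K$ need only belong to $H^{\mu}$ rather than be smooth. I would handle this by approximating $K$ in $H^{\mu}(M\times M)$ by smooth kernels, for which both assertions are transparent, and passing to the limit using the completeness of $S_p$ and the continuity of the maps involved.
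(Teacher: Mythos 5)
Your proposal is correct and follows essentially the same route as the paper: the factorisation $T=(I+\lapm)^{-\mu/2}\circ A$ with $A$ Hilbert--Schmidt, the Weyl-law computation showing $(I+\lapm)^{-\mu/2}\in S_q$ for $\mu q>n$, and the multiplication property $S_qS_2\subset S_p$ are precisely the ingredients of Theorem \ref{ext322} (your choice corresponds to the case $\mu_1=\mu$, $\mu_2=0$, i.e.\ regularity placed on the $x$-variable, whereas the paper runs the case $(0,\mu)$ and gets yours by duality). Your spectral comparison $(1+a)^{\mu/2}\le(1+a+b)^{\mu/2}$ is exactly the content of Proposition \ref{PROP:mixed-Sobolev}, so nothing essential is missing.
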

This results improves, for example, Sugiura's result for the operator
\eqref{EQ:Sugiura-trace-op}. Theorem \ref{THM:simplified} follows
from the main result Theorem \ref{ext322} giving criteria in terms of the mixed Sobolev
spaces, Proposition \ref{PROP:mixed-Sobolev}, and Corollary \ref{mix3}.
In particular, the use of mixed Sobolev spaces in Theorem \ref{ext322} allows
us to formulate criteria requiring different (smaller) regularities of $K(x,y)$ in $x$ and $y$, 
or only in one of these variables.
 
 We note that the situation for Schatten classes $S_p$ for $p>2$ is simpler and, 
 in fact, similar to that of
 $p=2$. For example, for left-invariant operators on compact Lie groups $G$, i.e. for convolution operators of the form
 $Tf=f*\varkappa$, it was shown in \cite{dr13:schatten} that the condition $\varkappa\in L^{p'}(G)$, $1\leq p'\leq 2$,
 implies that $T\in S_p(L^2(G))$, where $\frac1{p'}+\frac1p=1$. The converse of this is also true but for 
 interchanged indices, i.e. the condition $T\in S_p(L^2(G))$ but now for $1\leq p\leq 2$ implies that
 $\varkappa\in L^{p'}(G)$. We refer to \cite{dr13:schatten} for this as well as for the symbolic characterisation
 of Schatten classes in the setting of compact Lie groups.
 
In this work we allow singularities in the kernel so that the formula \eqref{EQ:Sugiura-trace}
would need to be modified in order for the integral over the diagonal to make sense. In such case, in order to calculate the trace of an integral operator using a non-continuous kernel along the diagonal, one idea is to average it to obtain an integrable function. Such an averaging process has been introduced by Weidmann \cite{weid:av} in the Euclidean setting, and applied by Brislawn in \cite{bri:k1}, \cite{bri:k2} 
for integral operators on $L^2(\ern)$ and on $L^2({\Omega},{\mathcal{M}},\mu)$, respectively, where $\Omega$ is a second countable topological space endowed with a $\sigma$-finite Borel measure. The corresponding extensions to the $L^p$ setting have been established in \cite{del:trace} and \cite{del:tracetop}. The $L^2$ regularity of such an averaging process
 is a consequence of the $L^2$-boundedness of the martingale maximal function. Denoting by $\widetilde{K}(x,x)$ the pointwise values of this averaging process, Brislawn  \cite{bri:k2} proved the following formula for a trace class operator $T$ on $L^2(\mu)$, for the extension to $L^p$ see \cite{del:trace}:
\beq\label{f1} \Tr(T)=\int_{\Omega} \widetilde{K}(x,x)d\mu(x).\eq

In Section \ref{SEC:Fourier} we describe the discrete Fourier analysis involved in our problem and establish several relations between eigenvalues and their multiplicities for elliptic positive pseudo-differential operators.
Then in Section \ref{SEC:Schatten-classes} we establish our criteria for Schatten classes on compact manifolds and, in particular, for the trace class in Section \ref{SEC:trace-class}. 
For this, we briefly recall the definition of the averaging process involved in the formula (\ref{f1}).  
We also explain another method relating the convergence of the Fourier coefficients of the kernel with the traceability. In Section \ref{SEC:Lie-groups} the special case of compact Lie groups is considered where we show that the criteria can be also given using hypoelliptic operators. 

The authors would like to thank V\'{e}ronique Fischer and Jens Wirth for discussions
and remarks.

\section{Fourier analysis associated to an elliptic operator}  
\label{SEC:Fourier}

In this section we start by recording some basic elements of Fourier analysis on compact manifolds which will be useful for our analysis.   

Let $M$ be a compact smooth manifold of dimension $n$ without boundary, endowed with a fixed volume $dx$.  
 We denote by $\Psi^{\nu}(M)$ the H\"ormander class of pseudo-differential 
 operators of order $\nu\in\er$,
 i.e. operators which, in every coordinate chart, are operators in H\"ormander classes 
 on $\Rn$ with symbols
 in $S^\nu_{1,0}$, see e.g. \cite{shubin:r} or \cite{rt:book}.
 In this paper we will be using the class  $\Psi^{\nu}_{cl}(M)$ of classical operators, i.e. operators
 with symbols having (in all local coordinates) an asymptotic expansion of the symbol in
 positively homogeneous components (see e.g. \cite{Duis:BK-FIO-2011}).
 Furthermore, we denote by $\Psi_{+}^{\nu}(M)$ the class of positive definite operators in 
 $\Psi^{\nu}_{cl}(M)$,
 and by $\Psi_{e}^{\nu}(M)$ the class of elliptic operators in $\Psi^{\nu}_{cl}(M)$. Finally, 
 $$\Psi_{+e}^{\nu}(M):=\Psi_{+}^{\nu}(M)\cap \Psi_{e}^{\nu}(M)$$ 
 will denote the  class of classical positive elliptic 
 pseudo-differential operators of order $\nu$.
 We note that complex powers of such operators are well-defined, see e.g.
 Seeley \cite{Seeley:complex-powers-1967}. 
 In fact, all pseudo-differential operators considered in 
 this paper will be classical, so we may omit explicitly mentioning it every time, but we note
 that we could equally work with general operators in $\Psi^{\nu}(M)$ since their
 powers have similar properties, see e.g. \cite{Strichartz:functional-calculus-AJM-1972}.
 
We now associate a discrete Fourier analysis to the operator $E\in\Psi_{+e}^{\nu}(M)$ inspired by those
considered by Seeley (\cite{see:ex}, \cite{see:exp}), see also \cite{Greenfield-Wallach:hypo-TAMS-1973}. 
However, we adapt it to our purposes and prove several auxiliary statements concerning the
eigenvalues of $E$ and their multiplicities, useful to us in the sequel.

The eigenvalues of $E$ form a sequence $\{\lambda_j\}$, with multiplicities taken into account. 
For each eigenvalue $\lambda_j$, there is
the corresponding finite dimensional eigenspace $F_j$ of functions on $M$, which are smooth due to the 
ellipticity of $E$. We set 
$$
d_j:=\dim F_j, 
\textrm{ and } F_0:=\ker E, \; \lambda_0:=0.
$$
We also set $d_{0}:=\dim F_{0}$. Since the operator $E$ is elliptic, it is Fredholm,
hence also $d_{0}<\infty$ (we can refer to \cite{Atiyah:global-aspects-1968} for
various properties of $F_{0}$ and $d_{0}$).

We fix  an orthonormal basis of $L^2(M,dx)$ consisting of eigenfunctions of $E$:
\beq\label{fam}
\{e^k_j\}_{j\geq 0}^{1\leq k\leq d_j},
\eq 
where $\{e^k_j\}^{1\leq k\leq d_j}$ is an orthonormal basis of $F_j$. 
We denote by $(\cdot,\cdot )$ the standard inner product on $L^2(M)$ associated to its volume element. 
Let $P_j:L^2(M)\rightarrow F_j$ be the corresponding orthogonal projections on $F_j$. 
 We observe that 
 \[P_jf=\sum\limits_{k=1}^{d_j}( f,e_j^k) e_j^k,\]
for $f\in L^2(M)$. 
The Fourier inversion formula takes the form 
\begin{equation}\label{EQ:FI-L2}
f=\sum\limits_{j=0}^{\infty}P_jf=\sum\limits_{j=0}^{\infty}\sum\limits_{k=1}^{d_j}(f,e_j^k ) e_j^k,
\end{equation}
for each $f\in L^2(M)$, and where the convergence is understood with respect to the $L^2(M)$-norm. 

\begin{defn}\label{def1} Let $u\in\mathcal{D}'(M)$ and let 
$j\in\ene_{0}=\ene\cup\{0\}$ be a nonnegative integer.
We define $\widehat{u}(j)\in F_j^*$ by $\widehat{u}(j)(\va):=u(\va)$, for $\va\in F_j$. 
\end{defn}

For the distributional valuations we use the notation $u(\va)$ or $\langle u, \va\rangle$. If $\varphi\in \mathcal{D}(M)$ and $u\in\mathcal{D}'(M)$ we have
 \[
 \varphi=\sum\limits_{j=0}^{\infty}\sum\limits_{k=1}^{d_j}( \varphi,e_j^k) e_j^k,
 \]
and 
\beq 
u(\varphi)=\sum\limits_{j=0}^{\infty}\sum\limits_{k=1}^{d_j}(\varphi,e_j^k) u(e_j^k).
\label{coef1}
\eq
We note that the same type of formula holds for operators from $\cinfm$ to $\mathcal{D}'(M)$, namely, if
$T:\cinfm\to\mathcal{D}'(M)$ is a linear continuous operator, we have
\beq 
T(\varphi)=\sum\limits_{j=0}^{\infty}\sum\limits_{k=1}^{d_j}(\varphi,e_j^k) T(e_j^k),
\label{coef2}
\eq
with an appropriate distributional understanding of convergence.
The Fourier coefficients  of $u\in\mathcal{D}'(M)$ associated to the basis (\ref{fam}) can be obtained from (\ref{coef1}):
\[\widehat{u}(j)(\va)=\sum\limits_{k=1}^{d_j}(\va,e_j^k)u(e_j^k) .\]
In particular, if $u\in L^2(M)$ we obtain
\[\widehat{u}(j)(\va)=\sum\limits_{k=1}^{d_j}(\va,e_j^k)(u,\overline{e_j^k}).\]
From the paragraph above we can deduce:  

\begin{prop} If $u\in\mathcal{D}'(M)$ then $\widehat{u}(j)\circ P_j$ is in ${\mathcal{D}}'(M)$, and 
\begin{equation}\label{EQ:FI-dist}
u=\sum\limits_{j=0}^\infty\widehat{u}(j)\circ P_j \textrm{ in }\mathcal{D}'(M).
\end{equation}
\end{prop}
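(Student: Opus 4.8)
The plan is to reduce both assertions to the expansion \eqref{coef1}, which already encodes the convergence of the eigenfunction expansion against a distribution, so that nothing new has to be proved beyond an unwinding of definitions and a regrouping of terms.

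First I would make the functional $\widehat{u}(j)\circ P_j$ explicit on $\mathcal{D}(M)=\cinfm$. For $\varphi\in\cinfm$ the projection $P_j\varphi=\sum_{k=1}^{d_j}(\varphi,e_j^k)e_j^k$ lies in $F_j\subset\cinfm$, so applying $\widehat{u}(j)$ and using its defining property $\widehat{u}(j)(\psi)=u(\psi)$ for $\psi\in F_j$ gives
\[
(\widehat{u}(j)\circ P_j)(\varphi)=u(P_j\varphi)=\sum_{k=1}^{d_j}(\varphi,e_j^k)\,u(e_j^k).
\]
Since $d_j<\infty$, the right-hand side is a finite linear combination, with fixed scalar coefficients $u(e_j^k)$, of the functionals $\varphi\mapsto(\varphi,e_j^k)$, each of which is continuous on $\mathcal{D}(M)$ because $e_j^k$ is smooth. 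Hence $\widehat{u}(j)\circ P_j\in\mathcal{D}'(M)$; in fact it is represented by a smooth function, so it is even a smooth density. This settles the first claim, and note that the finiteness of $d_j$ (guaranteed by the Fredholm property of the elliptic operator $E$) is what keeps this step elementary.

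For the convergence I would test against an arbitrary $\varphi\in\mathcal{D}(M)$ and use the computation above to write the $N$-th partial sum as
\[
\sum_{j=0}^{N}(\widehat{u}(j)\circ P_j)(\varphi)=\sum_{j=0}^{N}\sum_{k=1}^{d_j}(\varphi,e_j^k)\,u(e_j^k).
\]
By \eqref{coef1} the right-hand side tends to $u(\varphi)$ as $N\to\infty$, which is exactly the statement that $u=\sum_{j\geq 0}\widehat{u}(j)\circ P_j$ in $\mathcal{D}'(M)$. The genuine analytic content is hidden inside \eqref{coef1}, and the main obstacle, were one to establish it from scratch, is to show that the expansion $\sum_{j,k}(\varphi,e_j^k)e_j^k$ converges to $\varphi$ not merely in $L^2(M)$ but in the Fr\'echet topology of $\cinfm$; only this licenses interchanging the continuous functional $u$ with the infinite sum. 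This rests on the ellipticity of $E\in\Psi_{+e}^{\nu}(M)$: writing $(\varphi,e_j^k)=\lambda_j^{-m}(E^m\varphi,e_j^k)$ for large $m$ forces rapid decay of the Fourier coefficients, which together with the polynomial growth of the eigenvalues, control of the multiplicities $d_j$, and Sobolev embedding yields convergence in every $C^k$-norm. Since \eqref{coef1} is already available to us, the argument above is complete without revisiting this point.
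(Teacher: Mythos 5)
Your proposal is correct and follows essentially the same route as the paper: both reduce the identity to testing against $\varphi\in\mathcal{D}(M)$, expanding $P_j\varphi$ in the eigenbasis, using the definition $\widehat{u}(j)(e_j^k)=u(e_j^k)$, and invoking \eqref{coef1} to resum to $u(\varphi)$. Your additional remarks --- that $\widehat{u}(j)\circ P_j$ is a finite linear combination of the continuous functionals $\varphi\mapsto(\varphi,e_j^k)$, and that the real analytic content is the $C^\infty$-convergence underlying \eqref{coef1} --- are accurate and only make explicit what the paper leaves implicit.
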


\begin{proof} If $\varphi\in\mathcal{D}(M)$ is a test function we have  
\begin{multline*}
\sum\limits_{j= 0}^\infty(\widehat{u}(j)\circ P_j)(\varphi)=\sum\limits_{j\geq 0}\widehat{u}(j)(P_j\varphi) 
=\sum\limits_{j=0}^\infty\widehat{u}(j)\sum\limits_{k=1}^{d_j}(\varphi,e_j^k) e_j^k\\
=\sum\limits_{j=0}^\infty\sum\limits_{k=1}^{d_j}(\varphi,e_j^k)\widehat{u}(j)(e_j^k)
=\sum\limits_{j=0}^\infty\sum\limits_{k=1}^{d_j}(\varphi,e_j^k)u(e_j^k)
=u(\varphi),
\end{multline*}
in view of \eqref{coef1},
completing the proof.
\end{proof}

Comparing the Fourier inversion formula \eqref{EQ:FI-L2} in $L^2(M)$ with
the formula \eqref{EQ:FI-dist} in $\mathcal{D}'(M)$, for a function $f\in L^2(M)$,
we can identify its distributional Fourier coefficients $\widehat{f}(j)$ with their
action on the basis of $F_j$ given by 
$$
\widehat{f}(j,k):=(f,e_j^k).
$$
We will also denote 
sometimes by $\mathcal{F}$ the Fourier transform associating to $f\in L^2(M)$ its Fourier coefficients.
Since $\{e^k_j\}_{j\geq 0}^{1\leq k\leq d_j}$ forms a complete orthonormal system in $L^2(M)$, for all $f\in L^2(M)$ we have the Plancherel formula
\beq \label{EQ:Plancherel}
\|f\|^2_{L^2(M)}=\sum\limits_{j=0}^{\infty}\sum\limits_{k=1}^{d_j}|(f,e_j^k) _{L^2}|^2=
\sum\limits_{j=0}^{\infty}\sum\limits_{k=1}^{d_j}|\widehat{f}(j,k)|^2.
\eq
Since the criteria that we will obtain depend (a-priori) 
on the choice of the orthonormal basis $\{e^k_j\}$, 
the asymptotics of the corresponding eigenvalues play an essential role.  
We now establish several simple but useful relations between the eigenvalues $\lambda_j$
and their multiplicities $d_j$. 

\begin{prop} 
Let $M$ be a closed manifold of dimension $n$, and let $E\in\Psi_{+e}^{\nu}(M)$, with $\nu>0$. 
Then there exists a constant $C>0$ such that
\begin{equation}\label{EQ:Weyl}
d_j\leq C (1+\lambda_j)^{\frac{n}{\nu}}
\end{equation}
for all $j\geq 1$.
Moreover, we have
\beq
\label{asymp2}
\sum\limits_{j=1}^{\infty}d_j(1+\lambda_j)^{-q}<\infty \quad{\textrm{ if and only if }}\quad q>\frac{n}{\nu}.
\eq
\end{prop}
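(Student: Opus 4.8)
The plan is to reduce both assertions to the Weyl asymptotics for the eigenvalue counting function of $E$. Write $N(\lambda):=\sum_{\lambda_j\leq\lambda}d_j$ for the number of eigenvalues of $E$ not exceeding $\lambda$, counted with multiplicity. Since $E\in\Psi_{+e}^{\nu}(M)$ is a positive elliptic classical pseudo-differential operator of order $\nu>0$ on the closed $n$-manifold $M$, the classical Weyl law applies (via Seeley's analysis of complex powers and the associated resolvent/heat trace; see e.g. \cite{shubin:r}, \cite{Seeley:complex-powers-1967}): there is a constant $C_W>0$ with
\[
N(\lambda)=C_W\,\lambda^{n/\nu}+o(\lambda^{n/\nu}),\qquad \lambda\to+\infty.
\]
In particular there exist $c,C>0$ and $\lambda_*>0$ such that $c\,\lambda^{n/\nu}\leq N(\lambda)\leq C\,\lambda^{n/\nu}$ for $\lambda\geq\lambda_*$; equivalently, listing the positive eigenvalues with multiplicity as $0<\mu_1\leq\mu_2\leq\cdots$, one has $\mu_k\asymp k^{\nu/n}$.

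For the bound \eqref{EQ:Weyl}, first I would observe that for every $j\geq 1$ the single multiplicity is dominated by the whole count up to that level, $d_j\leq N(\lambda_j)$, since $N(\lambda_j)$ already includes the $d_j$ eigenfunctions attached to $\lambda_j$. The upper Weyl bound then gives $N(\lambda_j)\leq C\,(1+\lambda_j)^{n/\nu}$, where the shift by $1$ absorbs the finitely many small eigenvalues $\lambda_j<\lambda_*$ into the constant. Hence $d_j\leq C\,(1+\lambda_j)^{n/\nu}$ for all $j\geq 1$, which is \eqref{EQ:Weyl}.

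For the series \eqref{asymp2}, the cleanest route is to pass to the multiplicity-listed eigenvalues. Since the summand $(1+\lambda_j)^{-q}$ is constant across the $d_j$ eigenfunctions spanning $F_j$, I have
\[
\sum_{j=1}^{\infty}d_j\,(1+\lambda_j)^{-q}=\sum_{k=1}^{\infty}(1+\mu_k)^{-q}.
\]
Using $\mu_k\asymp k^{\nu/n}$, the right-hand side is comparable to $\sum_k k^{-q\nu/n}$, and the $p$-series test shows convergence if and only if $q\nu/n>1$, i.e. if and only if $q>\frac{n}{\nu}$. Equivalently, one may group the indices $j$ into dyadic blocks $\{j:2^m\leq 1+\lambda_j<2^{m+1}\}$; the two-sided Weyl estimate makes each block contribute a mass $\asymp 2^{mn/\nu}$, so the whole series is comparable to the geometric sum $\sum_m 2^{m(n/\nu-q)}$, which converges precisely when $q>\frac{n}{\nu}$.

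The only genuine input is the Weyl asymptotics itself; once it is available everything reduces to an elementary comparison with a $p$-series, and the bookkeeping is routine. The point requiring care is the divergence half, i.e. the case $q\leq\frac{n}{\nu}$: here I must invoke the \emph{lower} Weyl estimate $N(\lambda)\geq c\,\lambda^{n/\nu}$ (equivalently $\mu_k\leq C'k^{\nu/n}$), not merely the order-of-magnitude upper bound used for \eqref{EQ:Weyl}. The comparison $\mu_k\asymp k^{\nu/n}$ secures both directions of the equivalence at once. If one wished to avoid the sharp asymptotics and use only two-sided bounds on $N(\lambda)$, then in the dyadic grouping one should take the blocks wide enough, replacing the ratio $2$ by a fixed large $R$, so that the lower estimate $N(R^{m+1})-N(R^m)\gtrsim R^{mn/\nu}$ survives the mismatch between the constants $c$ and $C$.
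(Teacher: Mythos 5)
Your argument is correct, and for the bound \eqref{EQ:Weyl} it is essentially the paper's own: the authors apply the Weyl counting asymptotics to the first-order operator $(I+E)^{1/\nu}$ and conclude $d_j\leq C(1+\lambda_j)^{n/\nu}$ exactly as you do by dominating $d_j$ by $N(\lambda_j)$. For the equivalence \eqref{asymp2}, however, you take a genuinely different route. You use the two-sided Weyl law to get $\mu_k\asymp k^{\nu/n}$ for the multiplicity-listed eigenvalues and then reduce the statement to the $p$-series $\sum_k k^{-q\nu/n}$; you correctly flag that the divergence half needs the \emph{lower} Weyl estimate. The paper instead sets $T:=(I+E)^{-q/2}$, identifies the series with $\|T\|_{S_2}^2\asymp\|K\|_{L^2(M\times M)}^2$ where $K$ is the Schwartz kernel of $T$, and then invokes the standard (and sharp) near-diagonal singularity estimate $|K(x,y)|\leq C_\alpha|x-y|^{-\alpha}$ for $\alpha>n-\nu q/2$ for operators in $\Psi^{-\nu q/2}(M)$, so that $K\in L^2$ precisely when $\nu q>n$. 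The trade-off is roughly symmetric: your proof is self-contained modulo the full two-sided Weyl asymptotics and needs no kernel analysis, whereas the paper's proof needs only spectral data in the crude form of the Hilbert--Schmidt identity but leans on the sharpness of the kernel singularity order for the ``only if'' direction (an input comparable in depth to the lower Weyl bound). The paper's version also has the incidental advantage of staying within the kernel-estimate machinery that the rest of the article is built on. Both proofs are complete and correct.
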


\begin{proof}
We observe that $(1+\lambda_j)^{1/\nu}$ is an eigenvalue of the first-order elliptic positive
operator $(I+E)^{1/\nu}$ of multiplicity $d_j$. The Weyl formula for the eigenvalue counting
function for the operator $(I+E)^{1/\nu}$ yields
$$
\sum_{j:\ (1+\lambda_j)^{1/\nu}\leq \lambda} d_j=C_0\lambda^n+O(\lambda^{n-1})
$$
as $\lambda\to\infty$. This implies $d_j\leq C(1+\lambda_j)^{n/\nu}$ for sufficiently large
$\lambda_j$, implying \eqref{EQ:Weyl}.

We now prove \eqref{asymp2}.
Let us denote $T:=(I+E)^{-q/2}$. The eigenvalues of $T$ are $(1+\lambda_j)^{-q/2}$ with
multiplicities $d_j$, therefore, we obtain
\begin{equation}\label{EQ:HSK}
\sum\limits_{j=0}^{\infty}d_j(1+\lambda_j)^{-q}=\|T\|^2_{S_{2}}\asymp \|K\|^2_{L^2(M\times M)}.
\end{equation}
At the same time, by the functional calculus of pseudo-differential operators, we know that
$T\in \Psi^{-\nu q/2}(M)$, so that its kernel $K(x,y)$ is smooth for $x\not=y$, and near the diagonal
$x=y$, identifying points with their local coordinates, it satisfies
the estimate 
$$
|K(x,y)|\leq C_\alpha |x-y|^{-\alpha},
$$
for any $\alpha>n-\nu q/2$, 
see e.g. \cite{Duis:BK-FIO-2011} or \cite[Theorem 2.3.1]{rt:book}, and the order is sharp with respect to 
the order of the operator.
Thus, we get that $K\in L^2(M\times M)$ if and only if we can choose $\alpha$ such that
$n>2\alpha>2n-\nu q$, which together with \eqref{EQ:HSK} implies \eqref{asymp2}.
\end{proof}

\section{Schatten classes on compact manifolds}  
\label{SEC:Schatten-classes}

Before stating our first result, we point out that a look at the proof of 
\eqref{EQ:Sugiura-trace} (cf. \cite{sugiura:book}, Prop 3.5) shows 
that that statement can be already improved in the following way:
\begin{prop} \label{ext1} 
Let $\Delta=\frac{\partial ^2}{\partial\theta ^2}+\frac{\partial ^2}{\partial\phi ^2} $ be the Laplacian on $\TT$. 
Let $K(\theta,\phi)$ be a measurable function on $\TT$ and suppose that there exists an integer $q>1$ such that 
$\displaystyle{\Delta^{\frac{q}{2}}}K\in L^2(\T\times\T)$. Then the integral operator $L$ on 
 $L^2(\T)$, defined by
 \[(Lf)(\theta)=\int\limits_0^{2\pi}K(\theta,\phi)f(\phi)d\phi,\]
is trace class and has the trace
\[\Tr(L)=\frac{1}{2\pi}\int\limits_0^{2\pi}\widetilde{K}(\theta,\theta)d\theta,\]
where $\widetilde{K}$ stands for the averaging process described in Section \ref{SEC:trace-class}.
\end{prop}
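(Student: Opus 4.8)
The plan is to keep Sugiura's idea of passing to the Fourier series of the kernel, but to replace his absolute-convergence estimate by the factorisation method advertised in the introduction. Expanding $K$ in a double Fourier series $K\sim\sum_{a,b\in\zet}c_{a,b}\,e^{i(a\theta+b\phi)}$ and reading $\Delta^{q/2}$ through its Fourier multiplier, whose symbol has modulus $(a^2+b^2)^{q/2}$, the hypothesis $\Delta^{q/2}K\in L^2(\T\times\T)$ becomes the weighted summability $\sum_{(a,b)\neq(0,0)}(a^2+b^2)^{q}\abs{c_{a,b}}^2<\infty$. Since $q>1$ this forces $\sum_{(a,b)}\abs{c_{a,b}}^2<\infty$ (the zero mode being a single finite term), so $K\in L^2(\T\times\T)$ and $L$ is at least Hilbert--Schmidt; moreover a Cauchy--Schwarz estimate against $\sum_{(a,b)\neq(0,0)}(a^2+b^2)^{-q}<\infty$ (convergent exactly because $q>1$) shows that the Fourier series of $K$ converges absolutely, hence $K$ is continuous.

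The core of the argument is a factorisation of $L$ through a smoothing operator acting on the output variable. Set $J:=(I-\partial_\theta^2)^{q/2}$, the Fourier multiplier on $L^2(\T)$ with eigenvalues $(1+m^2)^{q/2}$, and write $L=J^{-1}\circ(JL)$, an identity that is transparent on Fourier series since $J$ and $J^{-1}$ are diagonal and commute. Here $J^{-1}\in S_2$ because $\n{J^{-1}}_{S_2}^2=\sum_{m\in\zet}(1+m^2)^{-q}<\infty$, which holds precisely as $q>1>\half$. The operator $JL$ has kernel $(I-\partial_\theta^2)^{q/2}K$ (differentiation in $\theta$ only), so
\[
\n{JL}_{S_2}^2\asymp\sum_{a,b}(1+a^2)^{q}\abs{c_{a,b}}^2\le\sum_{a,b}\p{1+a^2+b^2}^{q}\abs{c_{a,b}}^2<\infty,
\]
using $1+a^2\le 1+a^2+b^2$ together with the displayed hypothesis; thus $JL\in S_2$. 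The multiplication property \eqref{inn} with $p=q=2$ then yields $L=J^{-1}(JL)\in S_1$, that is, $L$ is trace class. It is worth noting that this step uses only regularity of $K$ of order $q$ in the single variable $\theta$ (or, symmetrically, in $\phi$), which already foreshadows the mixed-Sobolev refinements obtained later in the paper.

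For the trace it remains to evaluate $\Tr(L)$ in the exponential orthonormal basis $e_m(\theta)=(2\pi)^{-1/2}e^{im\theta}$, which is legitimate since $L\in S_1$. A direct computation of the diagonal entries shows that $(Le_m,e_m)$ is proportional to $c_{m,-m}$, and summing the now absolutely convergent series identifies $\Tr(L)$, up to the normalising constant fixed by the Plancherel convention on $\T$, with $\sum_{a}c_{a,-a}$. Because the Fourier series of $K$ converges uniformly, the latter sum is exactly the average of $K$ along the diagonal $\theta=\phi$; and since $q>1$ while $\T\times\T$ has dimension $2$, the Sobolev embedding $H^{q}\hookrightarrow C$ guarantees $K\in C(\T\times\T)$, so the averaged diagonal $\widetilde{K}(\theta,\theta)$ of Section \ref{SEC:trace-class} coincides with the honest restriction $K(\theta,\theta)$. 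This produces the stated formula $\Tr(L)=\frac{1}{2\pi}\int_0^{2\pi}\widetilde{K}(\theta,\theta)\,d\theta$.

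The main obstacle is not the trace-class membership, which the factorisation settles cleanly, but the rigorous passage from the basis sum $\sum_m(Le_m,e_m)$ to the diagonal integral: the formal manipulation produces the Dirichlet comb $\sum_m e^{im(\phi-\theta)}$, whose collapse onto the diagonal must be justified, and this is exactly where the continuity of $K$ (equivalently the absolute convergence of its Fourier series forced by $q>1$) enters. I also expect a little care with the normalising constant $1/2\pi$, which is determined once the Plancherel normalisation on $\T$ is matched with the measure used to define $L$.
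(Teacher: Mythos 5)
Your argument is correct, but it reaches the conclusion by a genuinely different route from the one the paper has in mind for this proposition. The paper obtains Proposition \ref{ext1} by inspecting Sugiura's original proof: one shows via Cauchy--Schwarz (as in Lemma \ref{lem22a}, using that $\sum_{(a,b)\neq(0,0)}(a^2+b^2)^{-q}<\infty$ precisely for $q>1=\dim\T$) that the double Fourier series of $K$ converges absolutely, concludes traceability from Lemma \ref{lem11}, and then invokes the averaged-kernel trace formula \eqref{f1}. You instead prove trace-class membership by the factorisation $L=J^{-1}(JL)$ with $J=(I-\partial_\theta^2)^{q/2}$ and $S_2\cdot S_2\subset S_1$ from \eqref{inn}; this is exactly the mechanism of Theorem \ref{ext322}, specialised to regularity in the single variable $\theta$, and it is strictly stronger than the Sugiura-type argument (your factorisation only needs $2q>1$ for $J^{-1}\in S_2$ together with one-variable regularity of the kernel, whereas the absolute-convergence route genuinely needs $q>1$ and two-variable control). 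In effect you have anticipated the improvement that the paper postpones to Section \ref{SEC:Schatten-classes}. For the trace you also diverge: rather than citing Brislawn's formula \eqref{f1}, you compute $\sum_m(Le_m,e_m)$ directly and use the continuity of $K$ (available here since $q>1$ gives $H^q(\TT)\hookrightarrow C(\TT)$, equivalently the absolute convergence of the Fourier series) to collapse the Dirichlet comb onto the diagonal and to identify $\widetilde K(\theta,\theta)$ with $K(\theta,\theta)$; this is legitimate, though it uses the continuity of $K$ in an essential way where \eqref{f1} would not. The only loose end, which you flag yourself, is the constant: with the operator defined against $d\phi$ and the orthonormal basis $(2\pi)^{-1/2}e^{im\theta}$ one finds $(Le_m,e_m)=2\pi c_{m,-m}$ and hence $\Tr(L)=\int_0^{2\pi}K(\theta,\theta)\,d\theta$; the prefactor $\frac{1}{2\pi}$ in the statement reflects Sugiura's normalised Haar measure, and your remark that this must be matched against the measure defining $L$ is exactly the right caveat.
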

Our criteria for Schatten classes will also depend on a test of square integrability operating on the kernels 
through an elliptic operator, and the result of Proposition \ref{ext1} will be
improved in Theorem \ref{ext322} 
(see specifically Corollary \ref{mix3})
by using a different approach to the problem. 
In the auxiliary next lemma 
 we show that such condition is independent of the choice of an elliptic operator.
 
\begin{lem} \label{invellipt} 
Let $M$ be a closed manifold. Let $E_1, E_2\in\Psi_{e}^{\nu}(M)$ with $\nu\in\mathbb R$
and let $h\in {\mathcal D}'(M)$.
Then ${E_1}h\in L^2(M)$ if and only if $E_2h\in L^2(M)$.
\end{lem}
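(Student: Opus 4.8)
The plan is to compare $E_1$ and $E_2$, which share the same order $\nu$, by passing through a parametrix. Since $E_1\in\Psi_e^{\nu}(M)$ is elliptic on the closed manifold $M$, it admits a parametrix $Q_1\in\Psi^{-\nu}(M)$ satisfying $Q_1E_1=I-R_1$ with a smoothing remainder $R_1\in\Psi^{-\infty}(M)$ (see e.g. \cite{shubin:r} or \cite{rt:book}). Applying this identity to $h$ gives the decomposition $h=Q_1E_1h+R_1h$ in $\mathcal{D}'(M)$, which is the backbone of the argument.

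Next I would apply $E_2$ and split accordingly, writing
$$E_2h=(E_2Q_1)(E_1h)+(E_2R_1)h.$$
The composition $E_2Q_1$ belongs to $\Psi^{\nu}(M)\cdot\Psi^{-\nu}(M)\subset\Psi^{0}(M)$ and is therefore bounded on $L^2(M)$. Hence, assuming $E_1h\in L^2(M)$, the first term $(E_2Q_1)(E_1h)$ lies in $L^2(M)$. For the second term I would use that $R_1$, being smoothing, maps $\mathcal{D}'(M)$ into $C^\infty(M)$, while $E_2\in\Psi^{\nu}(M)$ maps $C^\infty(M)$ into $C^\infty(M)$; thus $(E_2R_1)h\in C^\infty(M)\subset L^2(M)$ for \emph{any} distribution $h$. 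Combining the two contributions yields $E_2h\in L^2(M)$, so $E_1h\in L^2(M)\Rightarrow E_2h\in L^2(M)$.

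Finally, since the hypotheses are symmetric in $E_1$ and $E_2$, interchanging their roles (using a parametrix of $E_2$ instead) gives the reverse implication, and hence the desired equivalence.

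The step requiring the most care is the handling of the smoothing remainder $R_1$: because $E_1$ need not be invertible, one only obtains a parametrix modulo $\Psi^{-\infty}(M)$ rather than a genuine inverse. This is not a true obstacle, however, since on a compact manifold a smoothing operator always produces smooth, and therefore square-integrable, output; so the remainder term $(E_2R_1)h$ is automatically in $L^2(M)$ independently of $h$. The only other point to verify is the $L^2$-boundedness of order-zero operators and the composition calculus placing $E_2Q_1$ in $\Psi^{0}(M)$, both of which are standard for the classical H\"ormander classes used here.
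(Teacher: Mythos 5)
Your proposal is correct and follows essentially the same route as the paper: a parametrix $Q_1$ of $E_1$ with $Q_1E_1=I-R_1$, the decomposition $E_2h=(E_2Q_1)(E_1h)+(E_2R_1)h$, $L^2$-boundedness of the order-zero composition $E_2Q_1$, and smoothness of the remainder term, with the converse obtained by symmetry. The only (immaterial) difference is a sign convention in writing the parametrix identity.
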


\begin{proof} 
Let us suppose that ${E_1}h\in L^2(M)$ and consider a parametrix $L_1\in\Psi_{}^{-\nu}(M)$ of  ${E_1}$. In particular, there exists $R\in \Psi^{-\infty}(M)$ such that 
\[L_1E_1-R=I.\]
%We can write $h=L_1E_1h-Rh$, now taking into account that $L_1E_1\in\Psi^{0}(N)$ and $R\in\Psi^{\gamma}(N)$, we obtain
%$h\in L^2(N)$.  
Then
\begin{align*} E_2h=&E_2(L_1E_1-R)h\\
=&(E_2L_1)(E_1h)-(E_2R)h,
\end{align*}
with $E_2L_1\in\Psi^{0}(M),  E_2R\in\Psi^{-\infty}(M)$. Since $E_1h\in L^2(M)$ we obtain $(E_2L_1)(E_1h)\in L^2(M)$; the fact that $E_2R$ is smoothing gives us $(E_2R)h\in L^2(M)$, implying that
$E_2 h\in L^2(M)$.
\end{proof}   

We first establish a simple observation for powers of positive elliptic operators to belong to
Schatten classes $S_{p}$ on $L^{2}(M)$.

\begin{prop}\label{PROP:elliptic}
Let $M$ be a closed manifold of dimension $n$, and let $E\in \Psi^{\nu}_{+e}(M)$ be a
positive elliptic pseudo-differential operator of order $\nu>0$.
Let $0<p<\infty$.
Then 
\begin{equation}\label{EQ:ell-powers}
(I+E)^{-\alpha}\in S_{p}(L^{2}(M)) \;\textrm{ if and only if }\;
\alpha>\frac{n}{p \nu}.
\end{equation}
\end{prop}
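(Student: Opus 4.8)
The plan is to reduce the claim to the eigenvalue asymptotics already recorded in \eqref{asymp2}. The operator $(I+E)^{-\alpha}$ is a positive self-adjoint operator, and since $(1+\lambda_j)^{-\alpha}$ is an eigenvalue of $(I+E)^{-\alpha}$ of multiplicity $d_j$ for each $j\geq 0$, its singular values are exactly the numbers $(1+\lambda_j)^{-\alpha}$, each repeated $d_j$ times. Therefore, by the very definition of the Schatten norm,
\begin{equation*}
\|(I+E)^{-\alpha}\|_{S_p}^p=\sum_{j=0}^{\infty}d_j(1+\lambda_j)^{-\alpha p},
\end{equation*}
so that $(I+E)^{-\alpha}\in S_p(L^2(M))$ if and only if this series converges. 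The whole statement thus becomes a convergence question about the very series studied in the previous proposition.

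The next step is to match the exponents. Applying \eqref{asymp2} with $q=\alpha p$, the series $\sum_{j\geq 1}d_j(1+\lambda_j)^{-\alpha p}$ converges if and only if $\alpha p>\frac{n}{\nu}$, i.e. if and only if $\alpha>\frac{n}{p\nu}$, which is precisely the claimed threshold in \eqref{EQ:ell-powers}. The contribution from the single term $j=0$ (the kernel of $E$) is the finite quantity $d_0$, since $d_0<\infty$ by ellipticity, so including or excluding $j=0$ does not affect convergence; this lets me pass freely between the sum starting at $j=0$ appearing in the Schatten norm and the sum starting at $j=1$ used in \eqref{asymp2}. For $0<p<1$ the quantity $\|\cdot\|_{S_p}$ is only a quasinorm, but membership in $S_p$ is still equivalent to $p$-summability of the singular values, so the same equivalence holds verbatim across the whole range $0<p<\infty$.

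The only point requiring genuine care—and what I expect to be the main (though modest) obstacle—is the justification that the singular values of $(I+E)^{-\alpha}$ are exactly the $(1+\lambda_j)^{-\alpha}$ with the correct multiplicities. Here I would invoke that $E$ is positive and elliptic of order $\nu>0$, so $I+E$ is positive with discrete spectrum $\{1+\lambda_j\}$ tending to infinity, and its complex (here negative real) powers are well-defined by the functional calculus of Seeley \cite{Seeley:complex-powers-1967}, sharing the same eigenspaces $F_j$. Since $(I+E)^{-\alpha}$ is positive self-adjoint, its singular values coincide with its eigenvalues $(1+\lambda_j)^{-\alpha}$, and the eigenspace for $(1+\lambda_j)^{-\alpha}$ is $F_j$ of dimension $d_j$, giving the stated multiplicity. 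With this spectral identification in hand, the equivalence follows immediately from \eqref{asymp2}, completing the proof.
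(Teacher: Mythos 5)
Your proof is correct and follows essentially the same route as the paper's: identify the singular values of the positive operator $(I+E)^{-\alpha}$ as $(1+\lambda_j)^{-\alpha}$ with multiplicities $d_j$, write $\|(I+E)^{-\alpha}\|_{S_p}^p=\sum_j d_j(1+\lambda_j)^{-\alpha p}$, and conclude by \eqref{asymp2} with $q=\alpha p$. Your extra remarks on the finite $j=0$ contribution and the quasinorm range $0<p<1$ are harmless refinements of the same argument.
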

\begin{proof}
Let $\lambda_{j}$ denote the eigenvalues of $E$, each $\lambda_{j}$ having the multiplicity
$d_{j}$.
Then the operator $(I+E)^{-\alpha}$ is positive definite, 
its singular values are $(1+\lambda_{j})^{-\alpha}$ with multiplicities 
$d_{j}$. Therefore, 
$$
\|(I+E)^{-\alpha}\|_{S_p}^p
= \sum\limits_{j=0}^{\infty}d_j (1+\lambda_j)^{-\alpha p},
$$
which is finite if and only if $\alpha p>\frac{n}{\nu}$ by (\ref{asymp2}),
implying the statement.
\end{proof}

If $P$ is a pseudo-differential operator on $M$, 
for a function (or distribution) on $M\times M$, 
we will use the notation
$P_y K(x,y)$ to indicate that the operator $P$ is acting on the $y$-variable, the second factor 
of the product $M\times M$. 
%In order to simplify the notations in the proof and the statements of several results henceforth we will denote by $(E_j)$ the operator $I+E_j$, for $j=1,2$:
For a positive elliptic operator $P\in \Psi^\nu_{+e}(M)$, 
by the elliptic regularity, the Sobolev space $H^\mu(M)$ can be characterised
as the space of all distributions $f\in\mathcal D'(M)$ such that
$(I+P)^{\frac\mu\nu} f\in L^2(M)$, and this characterisation is independent of the choice of operator $P$
(see also Lemma \ref{invellipt}).

\medskip
We now define Sobolev spaces $H^{\mu_1,\mu_2}_{x,y}(M\times M)$
of mixed regularity $\mu_1,\mu_2\geq 0$. We observe that for $K\in L^2(M\times M)$,
we have
\begin{align*} \|K\|_{L^2(M\times M)}^2=\int\limits_{M\times M}|K(x,y)|^2 dxdy
=\int\limits_{M}\left(\int\limits_{M}|K(x,y)|^2dy\right)dx,
\end{align*}
or we can also write this as
\beq\label{fubini} K\in L^2(M\times M) 
\Longleftrightarrow
K\in L_x^2(M,L_y^2(M)).\eq
In particular, this means that $K_x$ defined by $K_x(y)=K(x,y)$ is well-defined for almost every 
$x\in M$ as a function in $L^2_y(M)$. 

\begin{defn}\label{def1} 
Let $K\in L^2(M\times M)$ and let $\mu_1,\mu_2\geq 0$.
We say that $K\in H^{\mu_1,\mu_2}_{x,y}(M\times M)$ if 
$K_x\in H^{\mu_2}(M)$ for almost all $x\in M$, and if 
$(I+P_x)^{\frac{\mu_1}{\nu}} K_x\in L^2_x(M,H^{\mu_2}_y(M))$ for some 
$P\in \Psi^\nu_{+e}(M)$, $\nu>0$.
We set
$$\|K\|_{H^{\mu_1,\mu_2}_{x,y}(M\times M)}:=
\left( \int_M\|(I+P_x)^{\frac{\mu_1}{\nu}} K_x\|_{H^{\mu_2}(M)}^2 dx\right)^{1/2}.
$$
\end{defn}
By the elliptic regularity it follows that different choices of 
operators $P\in \Psi^\nu_{+e}(M)$, $\nu>0$, give equivalent norms on 
the space $H^{\mu_1,\mu_2}_{x,y}(M\times M)$. Thus, for
operators $E_j\in\Psi^{\nu_j}_{+e}(M)$
($j=1,2$) with $\nu_j>0$, we can formulate Definition \ref{def1} 
in an alternative (and perhaps more practical) way:

\begin{defn}\label{def2} 
For
operators $E_j\in\Psi^{\nu_j}_{+e}(M)$
($j=1,2$) with $\nu_j>0$, we define
\begin{equation}\label{EQ:mixed-Sobolev}
K\in H^{\mu_1,\mu_2}_{x,y}(M\times M) \Longleftrightarrow
(I+E_1)_x^{\frac{\mu_1}{\nu_1}} (I+E_2)_y^{\frac{\mu_2}{\nu_2}} K \in L^2(M\times M),
\end{equation}
where the expression on the right hand side means that we are applying
pseudo-differential operators on $M$ separately in $x$ and $y$. We note that
these operators commute since they are acting on different sets of variables
of $K$. 
\end{defn}
As we have noted above, the definition does not depend on a particular choice of
operators $E_j\in\Psi^{\nu_j}_{+e}(M)$, with the norms of $K$ induced by
\eqref{EQ:mixed-Sobolev} being all equivalent to each other and to that
in Definition \ref{def1}.
In Proposition \ref{PROP:mixed-Sobolev} we establish some properties
of the spaces $ H^{\mu_1,\mu_2}_{x,y}$, namely, we will show the
inclusions between the mixed and the standard Sobolev spaces on
the compact (closed) manifold $M\times M$ as 
$$
H^{\mu_{1}+\mu_{2}}(M\times M)\subset H^{\mu_{1},\mu_{2}}_{x,y}(M\times M)
\subset H^{\min (\mu_{1},\mu_{2})}(M\times M),
$$
for any $\mu_{1},\mu_{2}\geq 0$.

\medskip
We will now give our main criteria for Schatten classes. 
\begin{thm} \label{ext322} 
Let $M$ be a closed manifold of dimension $n$ and let $\mu_1, \mu_2 \geq 0$. 
Let 
$K\in L^2(M\times M)$ be such that $K\in H^{\mu_1,\mu_2}_{x,y}(M\times M)$. Then
the integral operator $T$ on 
 $L^2(M)$, defined by
 \[(Tf)(x)=\int\limits_{M} K(x,y)f(y)dy,\]
is in the Schatten classes $S_r(L^2(M))$ for $r>\frac{2n}{n+2(\mu_1+\mu_2)}$.
\end{thm}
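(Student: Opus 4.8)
The plan is to factorise the operator $T$ as a product of two operators, each living in an explicit Schatten class, and then invoke the multiplication property \eqref{inn}. The key observation is that the hypothesis $K\in H^{\mu_1,\mu_2}_{x,y}(M\times M)$ says, via Definition \ref{def2}, that
$$
\widetilde K:=(I+E_1)_x^{\frac{\mu_1}{\nu_1}}(I+E_2)_y^{\frac{\mu_2}{\nu_2}} K\in L^2(M\times M),
$$
for any chosen operators $E_j\in\Psi^{\nu_j}_{+e}(M)$. Let $S$ be the integral operator with kernel $\widetilde K$. Since $\widetilde K\in L^2(M\times M)$, the operator $S$ is Hilbert--Schmidt, i.e. $S\in S_2(L^2(M))$. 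First I would recover $T$ from $S$ by undoing the two regularising factors: writing $A_1:=(I+E_1)^{-\frac{\mu_1}{\nu_1}}$ and $A_2:=(I+E_2)^{-\frac{\mu_2}{\nu_2}}$, one has, at the level of kernels and hence of operators,
$$
T=A_1\, S\, A_2 .
$$
The factor $A_1$ acts on the $x$-variable (post-composition on the output of $S$), while $A_2$ acts on the $y$-variable (pre-composition on the input), which matches the placement of the two operators in Definition \ref{def2}; verifying this kernel identity carefully is the one genuinely technical point.

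Next I would identify the Schatten classes of the two factors. By Proposition \ref{PROP:elliptic}, $A_1=(I+E_1)^{-\frac{\mu_1}{\nu_1}}\in S_{p_1}(L^2(M))$ precisely when $\frac{\mu_1}{\nu_1}>\frac{n}{p_1\nu_1}$, i.e. when $p_1>\frac{n}{\mu_1}$; similarly $A_2\in S_{p_2}(L^2(M))$ for any $p_2>\frac{n}{\mu_2}$. Now I would combine these using the multiplication property. With $S\in S_2$, I would first absorb $S$ against one factor: if $\frac1s=\frac1{p_1}+\frac12$ then $A_1 S\in S_s$, and then with the remaining factor $\frac1r=\frac1s+\frac1{p_2}=\frac1{p_1}+\frac12+\frac1{p_2}$ gives $T=A_1 S A_2\in S_r$. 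Taking $p_1,p_2$ as close to the critical values $\frac{n}{\mu_1},\frac{n}{\mu_2}$ as permitted, the exponent satisfies
$$
\frac1r>\frac{\mu_1}{n}+\frac12+\frac{\mu_2}{n}=\frac{n+2(\mu_1+\mu_2)}{2n},
$$
so $r>\frac{2n}{n+2(\mu_1+\mu_2)}$, which is exactly the claimed range.

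Some care is needed at the endpoints and in the degenerate cases. When $\mu_1=0$ (or $\mu_2=0$) the corresponding factor $A_1$ is the identity, which is not in any $S_{p_1}$, so there I would simply drop that factor from the factorisation and use only $S A_2\in S_r$ with $\frac1r=\frac12+\frac{\mu_2}{n}$; the same bookkeeping yields the stated exponent with $\mu_1+\mu_2$ replaced by the nonzero one, consistent with the formula. Since the Schatten classes are defined by a strict summability condition, the inequalities above must stay strict, which is why the conclusion is an open condition $r>\frac{2n}{n+2(\mu_1+\mu_2)}$ rather than an equality; choosing $p_1,p_2$ strictly larger than their critical values is exactly what delivers this. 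The main obstacle I anticipate is purely the first step: justifying the operator identity $T=A_1 S A_2$ rigorously for a general $L^2$-kernel $K$ (as opposed to a smooth one), i.e. checking that composing the integral operator with the pseudo-differential operators $A_1,A_2$ indeed corresponds to applying $(I+E_1)_x^{\mu_1/\nu_1}(I+E_2)_y^{\mu_2/\nu_2}$ to the kernel and inverting it; once this is in place, the Schatten bookkeeping via Proposition \ref{PROP:elliptic} and \eqref{inn} is routine.
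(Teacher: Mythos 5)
Your factorisation $T=A_1SA_2$ with $S$ the Hilbert--Schmidt operator having kernel $\widetilde K$, combined with Proposition \ref{PROP:elliptic} and the multiplication property \eqref{inn}, is exactly the paper's argument; the paper carries out the one step you flag as technical (that composing with $(I+E_2)^{-\mu_2/\nu_2}$ on the right matches applying the operator in the $y$-variable of the kernel) by choosing a real self-adjoint $E$ and an explicit inner-product computation with $g=E^{-\mu_2}f$, and it handles the cases $\mu_j=0$ the same way you do. One small slip: your displayed inequality should read $\frac1r=\frac1{p_1}+\frac12+\frac1{p_2}<\frac{\mu_1}{n}+\frac12+\frac{\mu_2}{n}$ (since $p_j>n/\mu_j$), which is what yields the stated open range $r>\frac{2n}{n+2(\mu_1+\mu_2)}$ after letting $p_j$ tend to the critical values.
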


\begin{rem} 
%We note that in the theorem above the higher the index $\ell$ can be chosen, the smaller is the index $r$ for the Schatten class obtained, and hence the singular values converge faster.
The value for $r$ comes from the relation 
\[\frac{1}{r}=\half+\frac {1}{p_1}+\frac {1}{p_2} ,\]
for some $0<p_1,p_2<\infty$, where the condition $r>\frac{2n}{n+2(\mu_1+\mu_2)}$ comes from $\mu_j>\frac{n}{p_j\nu_j}$ by a suitable 
 application of \eqref{EQ:ell-powers}. 
Also, since then $r=\frac{2p_1p_2}{p_1p_2+2(p_1+p_2)}$, the range for $r$ is the interval $(0,2)$ 
since, in general, $0<p_j<\infty .$ Therefore, Theorem \ref{ext322} provides
a sufficient condition for Schatten classes $S_r$ for $0<r<2$. For $\mu_1,\mu_2=0$
the conclusion is trivial and can be sharpened
to include $r=2$.
\end{rem}

\begin{rem}
We note that for $\mu_{1}=0$, Theorem \ref{ext322}
says that for $K\in L^{2}(M, H^{\mu}(M))$, we have that 
the corresponding operator $T$ satisfies
$T\in S_{r}$ for $r>\frac{2n}{n+2\mu}$. In this case no regularity
in the $x$-variable is imposed on the kernel. 

We also note that the `dual' result with $\mu_{2}=0$ imposing no regularity of $K$
with resect to $y$ also follows directly from it by considering the adjoint operator $T^{*}$
and using the equality $\|T^{*}\|_{S_{r}}=\|T\|_{S_{r}}.$
\end{rem}

\begin{proof}[Proof of Theorem \ref{ext322}] Let, for example, 
$E=(I+\Delta_{M})^{\half}$, where $\Delta_{M}$   is a 
positive definite elliptic differential operator of order $2$, and $E=\overline{E}$.
The existence of such $\Delta_{M}$ follows, for example, from the Whitney embedding theorem.

$(i)$  We first suppose that $\mu_1,\mu_2>0$. 
By Proposition \ref{PROP:elliptic} with $\alpha=\mu_2$ and $\nu=1$ we get, in particular, that 
$E_y ^{-\mu_2}\in S_{p_2}(L^2(M))$ for $\mu_2>\frac{n}{p_2}$, the first ingredient in the proof. 

%Since $\Delta_{M}$ is elliptic, we can assume without loss of generality that
%it is strictly positive (no zero eigenvalues) by adding an identity operator to it.

%Writing $\frac{1}{r}=\frac{1}{2}+\frac{1}{p}$ we see that 
%the condition $r>\frac{2n}{n+4\ell}$ implies $\ell p>\frac{n}{2}$.

Now let us consider the kernel $B(x,y)$ of the operator  $E_y ^{-\mu_2}\in S_{p_2}(L^2(M))$ for $\mu_2>\frac{n}{p_2}$. If $f\in L^2(M)$, let  
\[g(y):=\int\limits_{M}B(y,z)f(z)dz,\]
so that
\[E_y^{\mu_2}g(y)=f(y).\]

First we note that by \eqref{EQ:mixed-Sobolev} the condition $K\in H^{\mu_1,\mu_2}_{x,y}(M\times M)$ 
can be written as $E_x ^{\mu_1}E_y ^{\mu_2}K\in L^2(M\times M)$, with $\nu_1= \nu_2=1$. Since $E_x ^{\mu_1}E_y ^{\mu_2}K\in L^2(M\times M)$, we have
$E_x ^{\mu_1}E_y ^{\mu_2}K(x,\cdot)\in L_y^2(M)$ for almost every $x$, and this
 fact will justify the use of scalar products in the next argument.  

We observe that 
\begin{align*} E_x ^{\mu_1}Tf(x)=&\int_{M}E_x ^{\mu_1}K(x,y)f(y)dy\\
=&\int_{M}E_x ^{\mu_1}K(x,y)E_y ^{\mu_2}g(y)dy\\
=&(E_x ^{\mu_1}K(x,\cdot),E_y ^{\mu_2}\overline{g})_{L^2(M)}\\
=&(E_y ^{\mu_2}E_x ^{\mu_1}K(x,\cdot),\overline{g})_{L^2(M)}\\
=&(E_x ^{\mu_1}E_y ^{\mu_2}K(x,\cdot),\overline{g})_{L^2(M)}\\
=&\int_{M}E_x ^{\mu_1}E_y ^{\mu_2}K(x,y)g(y)dy\\
=&\int_{M}E_x ^{\mu_1}E_y ^{\mu_2}K(x,y)\left(\int_{M}B(y,z)f(z)dz\right)dy.
\end{align*} 

Now, setting $A(x,y):=E_x ^{\mu_1}E_y ^{\mu_2}K(x,y)$, we have shown that
\[
E_x ^{\mu_1}Tf(x)=\int_{M}E_x ^{\mu_1}K(x,y)f(y)dy=\int_{M}A(x,y)\left(\int_{M}B(y,z)f(z)dz\right)dy,
\]
thus we have factorised the operator $E_x ^{\mu_1}T$ with $A\in S_2$ and $B\in S_{p_2}$. 
By (\ref{inn}) we get that $E_x ^{\mu_1}T\in S_t$ with $\frac{1}{t}=\frac{1}{2}+\frac{1}{p_2} $.
 
On the other hand, since $E_x ^{-\mu_1}\in S_{p_1}$ for $\mu_1>\frac{n}{p_1} $, we obtain
\[T=E_x ^{-\mu_1}E_x ^{\mu_1}T\in S_r,\]
with
\[\frac 1r=\frac 1{p_1}+\frac 1t=\half+\frac 1{p_1}+\frac 1{p_2}.\]
Using inequalities $p_1>\frac{n}{\mu_1}$ and $p_2>\frac{n}{\mu_2}$, this is equivalent to 
\[r>\frac{2n}{n+2(\mu_1+\mu_2)}.\]
$(ii)$ If $\mu_1=0$ and $\mu_2>0$, just by removing the operator $E_x ^{\mu_1}$  in the argument above we get the desired result.\\
$(iii)$ If $\mu_1>0$ and $\mu_2=0$. This is a consequence of case $(ii)$ proceeding by duality, considering the adjoint of the operator $T$ and applying the fact that $\|T\|_{S_r}=\|T^*\|_{S_r}$. 
%Since 
%\[\frac 1p_j<\frac{\mu_j\nu_j}{n},\]
%we obtain
%\[\frac 1p_1+\frac 1p_2<\frac{\mu_1\nu_1+\mu_2\nu_2}{n}.\]
%Hence
%\[T\in S_r\]
%for $r>\frac{2n}{n+2(\mu_1\nu_1+\mu_2\nu_2)}$.
\end{proof}

\section{Trace class operators and their traces}
\label{SEC:trace-class}

We shall now briefly recall the averaging process which is required for the study of trace formulae for kernels with discontinuities along the diagonal. We start by defining the martingale
maximal function. Let $({\Omega},{\mathcal{M}},\mu)$ be a $\sigma$-finite measure space and
let $\{\mathcal{M}_j\}_{j}$ be a sequence of sub-$\sigma$-algebras such that
$$
\mathcal{M}_j\subset\mathcal{M}_{j+1}\,\,\textrm{ and }{\mathcal{M}}=\bigcup\limits_{j}\mathcal{M}_j.
$$
In order to define conditional expectations we assume that $\mu$ is $\sigma$-finite on each $\mathcal{M}_j$. In that case, 
if $f\in L^p(\mu)$, then $E(f|\mathcal{M}_n)$ exists. We say that a sequence $\{f_j\}_{j}$ of functions on $\Omega$ is a {\em martingale} if each $f_j$ is $\mathcal{M}_j$-measurable and
\beq E(f_j|\mathcal{M}_k)=f_k\,\mbox{ for }k<j.\eq
 In order to obtain a generalisation of the Hardy-Littlewood maximal function we consider the particular case of martingales generated by a single $\mathcal{M}$-measurable function $f$. The {\em martingale maximal function} is defined by
 \beq 
 Mf(x):=\sup\limits_{j}E(|f|\left .\right|\mathcal{M}_j)(x).
 \eq
This martingale can be defined, in particular, on a second countable topological space endowed
with a $\sigma$-finite Borel measure. For our purposes in the study of the kernel the sequence 
of $\sigma$-algebras is constructed from a corresponding increasing sequence of 
partitions $\mathcal{P}_j\times\mathcal{P}_j$ of $\Omega\times\Omega$ with 
$\Omega=M$, the closed manifold. 

Now, for each $(x,y)\in M\times M$ there is a unique $C_j(x)\times C_j(y)\in  \mathcal{P}_j\times\mathcal{P}_j$ containing $(x,y)$. Those sets $C_j(x)$ replace the cubes in $\ern$ in the definition of the classical Hardy-Littlewood maximal function. We refer to Doob 
\cite{Doob:bk-measure-theory-1994} for more details on the martingale maximal function
and its properties.

We denote by $A_j^{(2)}$ the averaging operators on $\Omega\times\Omega$:  Let $K\in
L_{loc}^1(\mu\otimes\mu)$, then the averaging $A_j^{(2)}$ is defined $\mu\otimes\mu$-almost everywhere (cf. \cite{bri:k2}) by 
\beq 
A_j^{(2)}K(x,y):=\frac{1}{\mu(C_j(x))\mu(C_j(y))}\int\limits_{C_j(x)}\int\limits_{C_j(y)}K(s,t)d\mu(t)d\mu(s).
\eq

The averaging process will be applied to the kernels $K(x,y)$ of our operators.  
As a consequence of the fundamental properties of the martingale maximal function it can be deduced that 
$$
\widetilde{K}(x,y):=\lim_{j\rightarrow\infty} A_j^{(2)}K(x,y),
$$ 
is defined almost everywhere and that it agrees with $K(x,y)$ in the points of continuity.  
 We observe that if $K(x,y)$ is the integral kernel of a trace class operators, 
 then $K(x,y)$ is, in particular, square integrable, and hence by the 
 H{\"o}lder inequality it is integrable on the compact manifold $M\times M.$

%In this section we shall introduce the martingale maximal function and some results from probability theory involving the averaging process above mentioned. For a more comprehensive accounts on these concepts, the book \cite{doob:mart} is a good reference. The classical references for the Maximal function of Hardy-Littlewood are \cite{ste:si} and \cite{ste:ha}. This process enable us to obtain a more general formula for the trace valid for every kernel of a nuclear operator. We will exploit the good properties of the particular kernel $k$ using the averaging process and the martingale maximal function.

\medskip
In the sequel in this section, we can always assume that $K\in L^2(M\times M)$ since
it is not restrictive because the trace class is included in the Hilbert-Schmidt class, and
the square integrability of the kernel is then a necessary condition. 

\medskip
As a corollary of Theorem \ref{ext322}, for the trace class operators we have:
\begin{cor} \label{ext322aw} 
Let $M$ be a closed manifold of dimension $n$ and let $K\in L^2(M\times M)$, 
$\mu_1, \mu_2 \geq 0$, be such that  $\mu_1+\mu_2>\frac n2$ and 
$K\in H^{\mu_1,\mu_2}_{x,y}(M\times M)$. Then
the integral operator $T$ on 
 $L^2(M)$, defined by
 \[(Tf)(x)=\int\limits_{M} K(x,y)f(y)dy\]
is trace class and its trace is given by
\begin{equation}\label{EQ:trace}
\Tr(T)=\int\limits_M\widetilde{K}(x,x)dx.
\end{equation}
\end{cor}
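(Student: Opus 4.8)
The plan is to treat the two assertions separately, since the trace-class membership is essentially immediate from the previous theorem while the trace formula reduces to a known general identity. First I would deduce that $T$ is trace class directly from Theorem \ref{ext322}. The hypothesis $\mu_1+\mu_2>\frac n2$ is precisely the condition $\frac{2n}{n+2(\mu_1+\mu_2)}<1$, so the value $r=1$ lies in the admissible range $r>\frac{2n}{n+2(\mu_1+\mu_2)}$ of Theorem \ref{ext322}. Hence $T\in S_1(L^2(M))$, i.e. $T$ is trace class, and in particular its kernel is square integrable, consistent with the standing assumption $K\in L^2(M\times M)$.

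For the trace formula I would invoke Brislawn's identity \eqref{f1}. The point is to verify that its hypotheses hold here: the compact manifold $M$ is second countable and its fixed volume $dx$ is a finite, hence $\sigma$-finite, Borel measure, so the averaging framework recalled in Section \ref{SEC:trace-class} applies. Since $M\times M$ is compact, $K\in L^2(M\times M)\subset L^1(M\times M)$, so the averaged kernel $\widetilde K(x,y)=\lim_{j\to\infty}A_j^{(2)}K(x,y)$ is defined almost everywhere and agrees with $K$ at points of continuity. Having already established $T\in S_1$, the formula \eqref{f1} applied with $\Omega=M$ gives $\Tr(T)=\int_M\widetilde K(x,x)\,dx$, which is the desired conclusion.

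I expect the only delicate point to be the verification that the abstract martingale machinery transfers faithfully to $M\times M$. Concretely, one must produce an increasing sequence of partitions $\mathcal P_j$ of $M$ such that the products $\mathcal P_j\times\mathcal P_j$ generate the Borel $\sigma$-algebra of $M\times M$ and the associated conditional expectations converge; this is exactly where the second countability of $M$ and the regularity of $dx$ enter. Once this setup is fixed, both the almost-everywhere convergence $A_j^{(2)}K\to\widetilde K$ and the trace identity \eqref{f1} (including the integrability of $\widetilde K(x,x)$ for trace class $T$) are the results of Brislawn \cite{bri:k2}, with the $L^p$ extension in \cite{del:trace}, so no further analysis is required. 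Thus the analytic content of the corollary is carried entirely by Theorem \ref{ext322}, and the statement is the combination of that theorem with the general trace formula \eqref{f1}.
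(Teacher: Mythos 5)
Your argument is exactly the paper's proof: membership in $S_1$ follows from Theorem \ref{ext322} since $\mu_1+\mu_2>\frac n2$ is equivalent to $1>\frac{2n}{n+2(\mu_1+\mu_2)}$, and the trace formula is then a direct application of Brislawn's identity \eqref{f1}. Your additional remarks on verifying the martingale framework on $M\times M$ are sound but not needed beyond what Section \ref{SEC:trace-class} already sets up.
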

\begin{proof} We observe that to get $r=1$ from Theorem \ref{ext322}, we require the following inequality to hold:
\[1>\frac{2n}{n+2(\mu_1+\mu_2)}.\]
But this is equivalent to $\mu_1+\mu_2>\frac n2$. The trace formula is a consequence of (\ref{f1}).
\end{proof}

Corollary \ref{ext322aw} improves, in particular, Proposition \ref{ext1}: 
\begin{cor}\label{mix3} 
Let $M$ be a smooth closed manifold of dimension $n$. 
Let $K\in L^2(M\times M)$ be such that $K\in H^{\mu}(M\times M)$ for 
 $\mu>\frac{n}{2}$. Then the integral operator $T$ on 
 $L^2(M)$, defined by
 \[(Tf)(x)=\int\limits_{M}K(x,y)f(y)dy,\]
is trace class on $L^{2}(M)$ and its trace is given by \eqref{EQ:trace}.
\end{cor}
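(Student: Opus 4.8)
The plan is to deduce this corollary directly from Corollary \ref{ext322aw}, using only the inclusion of standard into mixed Sobolev spaces recorded in Proposition \ref{PROP:mixed-Sobolev}. The point is that the hypothesis $K\in H^{\mu}(M\times M)$ with $\mu>\frn$ is a symmetric, single-index condition, whereas Corollary \ref{ext322aw} is phrased in terms of the mixed-regularity spaces $H^{\mu_1,\mu_2}_{x,y}(M\times M)$. Hence the only real work is to pass from the former to the latter with a suitable choice of indices $\mu_1,\mu_2$.

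First I would fix any decomposition $\mu=\mu_1+\mu_2$ with $\mu_1,\mu_2\geq 0$; for definiteness the symmetric choice $\mu_1=\mu_2=\mu/2$ will do, although concentrating all the regularity in a single variable works equally well. By the left-hand inclusion in Proposition \ref{PROP:mixed-Sobolev}, namely $H^{\mu_1+\mu_2}(M\times M)\subset H^{\mu_1,\mu_2}_{x,y}(M\times M)$, the assumption $K\in H^{\mu}(M\times M)$ immediately yields $K\in H^{\mu_1,\mu_2}_{x,y}(M\times M)$ for this choice of indices.

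Next, since $\mu_1+\mu_2=\mu>\frn$, all the hypotheses of Corollary \ref{ext322aw} are verified: we have $K\in L^2(M\times M)$, indices $\mu_1,\mu_2\geq 0$ with $\mu_1+\mu_2>\frn$, and the membership $K\in H^{\mu_1,\mu_2}_{x,y}(M\times M)$. Invoking that corollary, I conclude that $T$ is trace class on $L^2(M)$ and that its trace equals $\int_M\widetilde{K}(x,x)\,dx$, which is precisely the asserted formula \eqref{EQ:trace}.

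I do not expect any genuine obstacle here, since the substantive content has already been established upstream: the trace-class threshold is the case $r=1$ of Theorem \ref{ext322} (equivalently $\mu_1+\mu_2>\frn$), and the averaged trace formula comes from \eqref{f1} via Corollary \ref{ext322aw}. The single ingredient that must be in place is the embedding $H^{\mu_1+\mu_2}(M\times M)\subset H^{\mu_1,\mu_2}_{x,y}(M\times M)$; this is exactly what Proposition \ref{PROP:mixed-Sobolev} supplies, and its proof — where the comparison between full regularity on $M\times M$ and separate regularity in $x$ and $y$ is carried out — is the only place where any analytic effort resides. As that proposition is available to us, the corollary follows at once.
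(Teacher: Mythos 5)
Your argument is correct and is essentially the paper's own proof: the paper likewise deduces Corollary \ref{mix3} from Corollary \ref{ext322aw} via the left-hand inclusion $H^{\mu_1+\mu_2}(M\times M)\subset H^{\mu_1,\mu_2}_{x,y}(M\times M)$ of Proposition \ref{PROP:mixed-Sobolev}, only with the split $(\mu_1,\mu_2)=(0,\mu)$ rather than your symmetric choice $(\mu/2,\mu/2)$ --- a difference of no consequence, as you yourself note that any decomposition of $\mu$ works.
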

Indeed, Corollary \ref{mix3} follows from Corollary \ref{ext322aw} and the 
inclusion $H^{\mu}(M\times M)\subset H^{0,\mu}_{x,y}(M\times M)$, the latter being a
special case of the following inclusions between usual and mixed Sobolev spaces:

\begin{prop}\label{PROP:mixed-Sobolev}
Let $M$ be a smooth closed manifold . Then
we have the inclusions
\begin{equation}\label{EQ:Sobolevs}
H^{\mu_{1}+\mu_{2}}(M\times M)\subset H^{\mu_{1},\mu_{2}}_{x,y}(M\times M)
\subset H^{\min (\mu_{1},\mu_{2})}(M\times M),
\end{equation}
for any $\mu_{1},\mu_{2}\geq 0$.
\end{prop}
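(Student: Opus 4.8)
The plan is to reduce both inclusions to two elementary inequalities between spectral weights, exploiting the joint eigenbasis of $L^2(M\times M)$ built from tensor products of eigenfunctions of $\Delta_M$. Let $\{e_j^k\}$ be an orthonormal basis of $L^2(M)$ consisting of eigenfunctions of $\Delta_M$, with $\Delta_M e_j^k=\lambda_j e_j^k$ and $\lambda_j\geq 0$. Then the functions $e_j^k(x)e_{j'}^{k'}(y)$ form an orthonormal basis of $L^2(M\times M)$ that simultaneously diagonalises $(I+\Delta_M)_x$ and $(I+\Delta_M)_y$. Writing $a_j:=1+\lambda_j\geq 1$ and $b_{j'}:=1+\lambda_{j'}\geq 1$, and taking $E=(I+\Delta_M)^{\half}$ to fix the norm on $H^{\mu_1,\mu_2}_{x,y}$ (as in the proof of Theorem \ref{ext322}, which by the remark after Definition \ref{def2} gives a norm equivalent to the one in \eqref{EQ:mixed-Sobolev}), the Plancherel identity \eqref{EQ:Plancherel} on $M\times M$ yields
\[
\|K\|_{H^{\mu_1,\mu_2}_{x,y}(M\times M)}^2 \asymp \sum_{j,k,j',k'}|c_{jk,j'k'}|^2\, a_j^{\mu_1}b_{j'}^{\mu_2},
\]
where $c_{jk,j'k'}=(K,e_j^k\otimes e_{j'}^{k'})_{L^2(M\times M)}$ are the Fourier coefficients of $K$.

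For the standard Sobolev spaces I would use the product Laplacian: the operator $\mathcal E:=I+(\Delta_M)_x+(\Delta_M)_y$ is positive elliptic of order $2$ on $M\times M$ (it is $I$ plus the Laplace--Beltrami operator of the product metric), and it is diagonal in the same basis with eigenvalues $1+\lambda_j+\lambda_{j'}=a_j+b_{j'}-1$. By the basis-independent characterisation $H^{s}(M\times M)=\{K:\mathcal E^{s/2}K\in L^2(M\times M)\}$ coming from elliptic regularity, we obtain
\[
\|K\|_{H^{s}(M\times M)}^2 \asymp \sum_{j,k,j',k'}|c_{jk,j'k'}|^2\,(a_j+b_{j'}-1)^{s}.
\]
Since $a_j,b_{j'}\geq 1$ we have the two-sided bound $\max(a_j,b_{j'})\leq a_j+b_{j'}-1\leq 2\max(a_j,b_{j'})$, so the weight $(a_j+b_{j'}-1)^s$ can be replaced by $\max(a_j,b_{j'})^{s}$ throughout.

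With these two identities in hand, \eqref{EQ:Sobolevs} follows from the pointwise comparison of weights, valid for all $a,b\geq 1$ and all $\mu_1,\mu_2\geq 0$:
\[
\max(a,b)^{\min(\mu_1,\mu_2)}\leq a^{\mu_1}b^{\mu_2}\leq \max(a,b)^{\mu_1+\mu_2}.
\]
The right-hand inequality is immediate from $a,b\leq\max(a,b)$ together with the nonnegativity of the exponents, and it gives the inclusion $H^{\mu_1+\mu_2}(M\times M)\subset H^{\mu_1,\mu_2}_{x,y}(M\times M)$. For the left-hand inequality I would assume without loss of generality that $\mu_1\leq\mu_2$ and check the cases $a\geq b$ and $a<b$ separately, using $a,b\geq 1$ and $b^{\mu_1}\leq b^{\mu_2}$; this yields the second inclusion $H^{\mu_1,\mu_2}_{x,y}(M\times M)\subset H^{\min(\mu_1,\mu_2)}(M\times M)$. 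Applying these inequalities termwise in the spectral sums completes the argument.

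The weight inequalities themselves are elementary, and the real point is to set up the right framework rather than to overcome a hard estimate. The step deserving most care is that one cannot compare the norms by a naive symbolic calculus, since $E_x$ alone is not elliptic on the product manifold; the joint eigenbasis is precisely what sidesteps this, replacing operator comparisons with scalar weight comparisons. Thus the only genuinely technical input is the second displayed equivalence, namely that $\mathcal E\in\Psi^2_{+e}(M\times M)$ legitimately characterises $H^{s}(M\times M)$ through its spectral weights, which is standard elliptic regularity on a closed manifold; once granted, together with $a+b-1\asymp\max(a,b)$ for $a,b\geq 1$, the proof is complete.
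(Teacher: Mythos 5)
Your proof is correct and follows essentially the same route as the paper: both diagonalise everything in the tensor eigenbasis $e_j^k(x)e_l^m(y)$ of $\Delta_M$, characterise the mixed norm by the weights $(1+\lambda_j)^{\mu_1}(1+\lambda_l)^{\mu_2}$ and the standard Sobolev norm by $(1+\lambda_j+\lambda_l)^{s}$ via the product operator $I+(\Delta_M)_x+(\Delta_M)_y$, and conclude by the elementary comparison $(1+\lambda_j+\lambda_l)^{\min(\mu_1,\mu_2)}\leq C(1+\lambda_j)^{\mu_1}(1+\lambda_l)^{\mu_2}\leq C(1+\lambda_j+\lambda_l)^{\mu_1+\mu_2}$. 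Your intermediate replacement of $a+b-1$ by $\max(a,b)$ is a harmless cosmetic variant of the same weight inequality.
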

\begin{proof}
Let $\Delta$ be a second order positive elliptic differential operator on $M$
(such an operator exists e.g. by the Whitney embedding theorem),
and let $e_{j}^{k}$ denote the orthonormal basis and $\lambda_{j}$ the corresponding
eigenvalues, leading to the discrete Fourier analysis associated to $\Delta$ as described
in Section \ref{SEC:Fourier}. Then the products
$e_{j}^{k}(x)e_{l}^{m}(y)$ give rise to an orthonormal basis in $L^{2}(M\times M)$.
Consequently, using \eqref{EQ:mixed-Sobolev}
with $E_{1}=E_{2}=\Delta$, we see that 
$K\in H^{\mu_{1},\mu_{2}}_{x,y}(M\times M)$ is equivalent to the condition
\begin{equation}\label{EQ:K1}
\sum_{j=0}^{\infty}\sum_{k=1}^{d_{j}}\sum_{l=0}^{\infty}\sum_{m=1}^{d_{l}}
(1+{\lambda_{j}})^{\mu_{1}} (1+{\lambda_{l}})^{\mu_{2}}
|\widehat{K}(j,k,l,m)|^{2}<\infty,
\end{equation}
where the Fourier coefficients $\widehat{K}(j,k,l,m)$ are determined by the
Fourier series
$$
K(x,y)=\sum_{j=0}^{\infty}\sum_{k=1}^{d_{j}}\sum_{l=0}^{\infty}\sum_{m=1}^{d_{l}}
\widehat{K}(j,k,l,m)e_{j}^{k}(x)e_{l}^{m}(y).
$$
On the other hand, we note that 
$(1+\Delta_{x}+\Delta_{y})e_{j}^{k}(x)e_{l}^{m}(y)=(1+\lambda_{j}+\lambda_{l})
e_{j}^{k}(x)e_{l}^{m}(y)$, which implies that 
$K\in H^{\mu}(M\times M)$ is equivalent to
\begin{equation}\label{EQ:K2}
\sum_{j=0}^{\infty}\sum_{k=1}^{d_{j}}\sum_{l=0}^{\infty}\sum_{m=1}^{d_{l}}
(1+{\lambda_{j}}+{\lambda_{l}})^{\mu}
|\widehat{K}(j,k,l,m)|^{2}<\infty.
\end{equation}
Comparing the expressions in \eqref{EQ:K1} and \eqref{EQ:K1}, and using the
inequalities
$$
(1+{\lambda_{j}}+{\lambda_{l}})^{\min (\mu_{1},\mu_{2})}\leq
C(1+{\lambda_{j}})^{\mu_{1}} (1+{\lambda_{l}})^{\mu_{2}}
\leq 
C(1+{\lambda_{j}}+{\lambda_{l}})^{\mu_{1}+\mu_{2}},$$ 
we obtain the
inclusions \eqref{EQ:Sobolevs}.
\end{proof}

We also obtain some corollaries in terms of the derivatives of the kernel. We denote 
by $C_x^\alpha C_{y}^{\beta}(M\times M)$ the space of functions of class $C^{\beta}$ 
with respect to $y$ and $C^\alpha$ with respect to $x$.

 \begin{cor} \label{cor25} 
 Let $M$ be a closed manifold of dimension $n$. Let   
 $K\in C_x^{\ell_1} C_{y}^{\ell_2}(M\times M)$ some even integers $\ell_1,\ell_2\in 2\mathbb N_0$
 such that $\ell_1+\ell_2>\frac n2$. Then
 the integral operator $T$ on 
 $L^2(M)$, defined by
 \[(Tf)(x)=\int_{M}K(x,y)f(y)dy,\]
is in $S_1(L^2(M))$, and its trace is given by 
\begin{equation}\label{EQ:trace2}
\Tr(T)=\int\limits_M K(x,x)dx.
\end{equation}
\end{cor}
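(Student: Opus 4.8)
The plan is to deduce Corollary \ref{cor25} directly from Corollary \ref{ext322aw} by showing that the classical regularity hypothesis $K\in C_x^{\ell_1}C_y^{\ell_2}(M\times M)$, with $\ell_1,\ell_2$ \emph{even} integers, already places $K$ in the mixed Sobolev space $H^{\ell_1,\ell_2}_{x,y}(M\times M)$. The crucial observation, and the reason the evenness is imposed, is that when $\ell_1$ and $\ell_2$ are even the operators $(I+\Delta_M)^{\ell_1/2}$ and $(I+\Delta_M)^{\ell_2/2}$ are genuine differential operators (polynomials in $\Delta_M$) of orders exactly $\ell_1$ and $\ell_2$, rather than merely pseudo-differential operators of fractional order; differential operators of these orders map $C^{\ell_1}$ and $C^{\ell_2}$ functions to continuous functions.

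First I would fix the positive elliptic operator $E=(I+\Delta_M)^{1/2}$ used in the proof of Theorem \ref{ext322}, so that $E^{\ell_1}=(I+\Delta_M)^{\ell_1/2}$ and $E^{\ell_2}=(I+\Delta_M)^{\ell_2/2}$ are differential operators of orders $\ell_1$ and $\ell_2$. Applying $E_x^{\ell_1}E_y^{\ell_2}$ to $K$ involves only the mixed derivatives $\partial_x^{\alpha}\partial_y^{\beta}K$ with $|\alpha|\leq \ell_1$ and $|\beta|\leq \ell_2$; since $K\in C_x^{\ell_1}C_y^{\ell_2}(M\times M)$ these all exist and are continuous on $M\times M$. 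Hence $E_x^{\ell_1}E_y^{\ell_2}K$ is continuous on the compact manifold $M\times M$, therefore bounded, therefore in $L^2(M\times M)$. By Definition \ref{def2} (taking $E_1=E_2=\Delta_M$, $\nu_1=\nu_2=2$, $\mu_1=\ell_1$, $\mu_2=\ell_2$, so that $(I+\Delta_M)^{\mu_j/\nu_j}=(I+\Delta_M)^{\ell_j/2}$) this is precisely the statement that $K\in H^{\ell_1,\ell_2}_{x,y}(M\times M)$.

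With this established, the hypothesis $\ell_1+\ell_2>\frac{n}{2}$ allows me to invoke Corollary \ref{ext322aw} directly: the operator $T$ is trace class and $\Tr(T)=\int_M\widetilde{K}(x,x)\,dx$. The last step is to replace $\widetilde{K}$ by $K$ on the diagonal. Since $K\in C_x^{\ell_1}C_y^{\ell_2}(M\times M)$ is in particular a continuous function on $M\times M$, and since, as recorded in Section \ref{SEC:trace-class}, the averaged kernel $\widetilde{K}$ agrees with $K$ at every point of continuity, we have $\widetilde{K}(x,y)=K(x,y)$ everywhere; in particular $\widetilde{K}(x,x)=K(x,x)$ for all $x\in M$. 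Thus the trace formula \eqref{EQ:trace} becomes the asserted formula \eqref{EQ:trace2}.

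I expect the only genuinely delicate point to be the bookkeeping in the first step: one must verify that an even-order power of $I+\Delta_M$ is indeed a differential operator of the matching order, and that the composition of a differential operator of order $\ell_1$ in $x$ with one of order $\ell_2$ in $y$ carries $C_x^{\ell_1}C_y^{\ell_2}(M\times M)$ into $C^0(M\times M)$. Everything else is a direct application of Corollary \ref{ext322aw} combined with the continuity of $K$.
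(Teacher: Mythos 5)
Your proposal is correct and follows essentially the same route as the paper: the paper's proof likewise applies $(I+\Delta_M)_x^{\ell_1/2}(I+\Delta_M)_y^{\ell_2/2}$ (a differential operator thanks to the evenness of $\ell_1,\ell_2$) to conclude that the result lies in $C(M\times M)\subset L^2(M\times M)$, and then invokes Corollary \ref{ext322aw}. Your additional remarks --- the identification with $H^{\ell_1,\ell_2}_{x,y}(M\times M)$ via Definition \ref{def2} and the observation that $\widetilde{K}=K$ on the diagonal by continuity --- are exactly the details the paper leaves implicit.
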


\begin{proof} 
Let $\lapm$ be an elliptic positive definite second order differential operator on $M$, then 
$(I+\lapm)_x ^{\frac{\ell_1}{2}}(I+\lapm)_y ^{\frac{\ell_2}{2}}K\in C(M\times M)\subset L^2(M\times M)$. 
Now, by observing that $\ell_1+\ell_2>\frac{n}{2}$ the result follows from Corollary \ref{ext322aw} . 
\end{proof}

\begin{rem} The index $\frac n2$ in Corollary \ref{cor25} is sharp. Indeed, for the torus $\tn$ with $n$ even, there exist a function $\chi$ of class $C^{\frac n2}$ such that the series of Fourier coefficients diverges (cf. \cite{ste-we:fa}, Ch. VII; \cite{wai:trig}). By considering the convolution kernel $K(x,y)=\chi(x-y)$, the singular values of the operator $T$ given by $Tf=f*\chi$ agree with the absolute values of the Fourier coefficients. Hence, $T\notin S_1(L^2(\tn))$ but $K\in C^{\frac n2}(M\times M)$
(we can think of e.g. $\ell_1=0$ and $\ell_2=\frac n2$ in Corollary  \ref{cor25}).
On the other hand, concerning necessary conditions on the kernel,
writing the convolution operator $T$ in the pseudo-differential form
$$
Tf(x)=\sum_{\xi\in\mathbb Z^{n}} e^{i x\cdot\xi} \sigma(\xi)\widehat{f}(\xi)
$$
with $\sigma(\xi)=\widehat{\chi}(\xi)$,
it can be shown that $\sum\limits_{\xi\in\mathbb Z^{n}}|\sigma(\xi)|<\infty $ if and only if the corresponding pseudo-differential operator $T_{\sigma}$ is trace class on $L^2(\Tn)$ (cf. \cite{dr13:schatten}). Hence, when dealing with a multiplier we can deduce that if $T_{\sigma}$ is trace class then its kernel is continuous. This can be obtained from the formula for the convolution kernel
\[K(x,y)=\sum\limits_{\xi\in\mathbb Z^{n}}e^{i(x-y)\xi}\sigma(\xi)\]
and the summability of $\sigma$. Therefore, the continuity of kernels is a necessary condition for traceability of convolution operators on $\Tn$. However, as we note from the example (\ref{EQ:Carleman2}) on $M=\T$ the convolution kernel $K(x,y)=\varkappa(x-y)$ is continuous but the corresponding operator is not trace class. 
\end{rem}

We now make some remarks about the relation between the trace class property and the 
Fourier coefficients of the kernel, in the sense of Section \ref{SEC:Fourier}.

\medskip
The main idea in the proof by Sugiura of \eqref{EQ:Sugiura-trace}
(and then also of Proposition \ref{ext1})
consists in exploiting the underlying relation between 
the convergence of the series of Fourier coefficients of the kernel $K\in C^2(\mathbb{T}^1\times\mathbb{T}^1)$ and the traceability.
 This link is basically reduced to the application of the following  classical result, see e.g. \cite{sugiura:book}: 
 
\begin{lem} \label{lem11} 
Let $H$ be a separable Hilbert space.  If a bounded linear operator $T$ on $H$ 
 satisfies 
  \[\sum\limits_{m,j\in\ene }|\p{T\phi_j,\phi_m}|<\infty\]
 for a fixed orthonormal basis $(\phi_j)_j$, then $T$ is trace class.
\end{lem}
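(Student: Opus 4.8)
The plan is to realise $T$ explicitly as an absolutely convergent series of rank-one operators in the trace-class norm, and then to invoke the completeness of $S_1$. Write $T_{mj}:=\p{T\phi_j,\phi_m}$ for the matrix entries of $T$ in the basis $(\phi_j)_j$, and let $P_{mj}$ denote the rank-one operator $P_{mj}f:=\p{f,\phi_j}\phi_m$. First I would record that each $P_{mj}$ is trace class with $\n{P_{mj}}_{S_1}=1$: indeed $|P_{mj}|=\p{P_{mj}^*P_{mj}}^{1/2}$ is the orthogonal projection onto the line spanned by $\phi_j$, whose only nonzero singular value equals $1$.

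Since $H$ is separable, the index set $\ene\times\ene$ is countable, so I would fix an enumeration of the pairs $(m,j)$ and form the partial sums $S_N$ of the series $\sum_{m,j}T_{mj}P_{mj}$. By the triangle inequality in $S_1$ together with $\n{P_{mj}}_{S_1}=1$, the difference $\n{S_N-S_{N'}}_{S_1}$ is bounded by the sum of $|T_{mj}|$ over the symmetric difference of the two index ranges, which is arbitrarily small by the hypothesis $\sum_{m,j}|T_{mj}|<\infty$. Hence $(S_N)$ is a Cauchy sequence in $S_1$, and by completeness of this Banach space it converges in the $S_1$-norm to some operator $S\in S_1$ with $\n{S}_{S_1}\leq\sum_{m,j}|T_{mj}|$.

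It then remains to identify the limit $S$ with $T$. Because $\n{\cdot}_{op}\leq\n{\cdot}_{S_1}$, convergence in $S_1$ implies convergence in the operator norm, so the matrix coefficients pass to the limit: $\p{S\phi_j,\phi_m}=\sum_{m',j'}T_{m'j'}\p{P_{m'j'}\phi_j,\phi_m}=T_{mj}$, using $\p{P_{m'j'}\phi_j,\phi_m}=\delta_{jj'}\delta_{mm'}$. Thus $S$ and $T$ have identical matrices in the basis $(\phi_j)_j$, and since both operators are bounded they agree on all of $H$; therefore $T=S\in S_1$, which proves the lemma.

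The one genuinely delicate point is this last identification: the series defining $S$ converges a priori only in norm, not as a termwise operator identity, so the equality $S=T$ must be extracted from the operator-norm convergence by comparing coefficients against the fixed basis rather than assuming pointwise evaluation. By contrast, the computation $\n{P_{mj}}_{S_1}=1$ and the tail estimate reorganising the double sum into a single absolutely convergent series are routine.
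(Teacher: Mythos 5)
Your argument is correct. Note, however, that the paper does not actually prove Lemma \ref{lem11}: it is stated as a classical result with a pointer to Sugiura's book, where the standard proof runs quite differently. The textbook route takes the polar decomposition $T=U|T|$ and an orthonormal system $(\psi_k)$ of eigenvectors of $|T|$, writes $s_k(T)=\p{T\psi_k,U\psi_k}$, expands $\psi_k$ and $U\psi_k$ in the basis $(\phi_j)_j$, and bounds $\sum_k s_k(T)\leq \sum_{m,j}|\p{T\phi_j,\phi_m}|$ using Cauchy--Schwarz and Bessel's inequality; its only prerequisite is the singular value decomposition. Your route instead decomposes $T$ as an absolutely convergent series $\sum_{m,j}\p{T\phi_j,\phi_m}P_{mj}$ of rank-one operators and leans on the completeness of $(S_1,\n{\cdot}_{S_1})$, identifying the limit with $T$ by matching matrix coefficients (and you are right that this identification, via operator-norm convergence and the fact that a bounded operator is determined by its matrix in an orthonormal basis, is the one point that needs care). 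Both arguments yield the same quantitative conclusion $\n{T}_{S_1}\leq\sum_{m,j}|\p{T\phi_j,\phi_m}|$; the classical proof is more self-contained, while yours trades the polar decomposition for the Banach-space structure of $S_1$ (which the paper records anyway) and has the small side benefit of exhibiting $T$ explicitly as an $S_1$-limit of finite-rank operators.
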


By choosing an orthonormal basis $\{\phi_j\}$ consisting of eigenfunctions of the 
Laplacian on $\mathbb{T}^1\times\mathbb{T}^1$ one can prove that 
the Fourier coefficients of $K$ agree with the values $\p{T\phi_j,\phi_m}$. 
On the other hand, it can be shown that the series of Fourier 
 coefficients converges absolutely and hence the traceability follows. The proof can be extended to smooth closed manifolds by using elliptic positive pseudo-differential operators instead of Laplacians on $M\times M$, and associating a discrete Fourier analysis to the cross product $M\times M$. However, this method leads to a weaker result, by furnishing the condition $K=K(x,y)\in C^{\nu}(M\times M)$ with $\nu >n$. We have improved that kind of result by obtaining a sharp condition on the regularity of $K$ with respect to $y$ as given in Corollary \ref{cor25}
 (with $\ell_{1}=0$ and $\ell_{2}>n/2$). 
 
However, for the sake of completeness we establish below a result concerning the convergence of series of Fourier coefficients. The related problem concerning the convergence of Fourier series on compact manifolds has been studied by
Taylor \cite{tay:fc}. Taylor's paper also included a special version for compact Lie groups. Similar results for compact connected Lie groups from a different approach were obtained by Sugiura in \cite{sugiura:p1}. In order to study such kind of convergence for a kernel $K$ on $M\times M$ we will first apply the convergence criterion (\ref{asymp2}) to the manifold $M\times M$. In the following lemma we will associate to an elliptic positive  pseudo-differential operator ${E}$ of order $\nu$ on $M\times M$ an orthonormal basis of $L^2(M\times M)$ which we denote by $\{e_{\ell}^m\}_{\ell\geq 0}^{1\leq m\leq d_{\ell}}$. 
%We point out that another possibility consist in simply refer to the Theorem by Taylor 
%(cf. \cite{tay:fc}) but here we prefer to present an independent and self-contained viewpoint.
%given by $\{e^k_je^m_l\}_{j,\,l\geq 0}^{1\leq k\leq d_j,\,1\leq m\leq d_l} $
%The lemma below furnish basic information on the convergence of Fourier coefficients of kernels.

\begin{lem} \label{lem22a} 
Let $M$ be a compact smooth manifold of dimension $n$, and let ${E}\in\Psi_{+e}^{\nu}(M\times M)$ with $\nu>n$. Let $K\in L^2(M\times M)$ be such that $EK\in L^2(M\times M)$.  
Let $\lambda_{\ell}$ be the eigenvalues of ${E}$ and let $d_{\ell}$ be the corresponding multiplicities.
With respect to the orthonormal basis $\{e_{\ell}^m\}_{\ell\geq 0}^{1\leq m\leq d_{\ell}}$ of $L^2(M\times M)$, $K(x,y)$ can be written as
\begin{equation}\label{EQ:Kernel-FT}
 K(x,y)=\sum\limits_{\ell=0}^{\infty}\sum\limits_{m=1}^{d_{\ell}}\widehat{K}(\ell,m)e_{\ell}^m(x,y),
\end{equation}
with 
$\sum\limits_{\ell=0}^{\infty}\sum\limits_{m=1}^{d_{\ell}} |\widehat{K}(\ell,m)|<\infty$,
and the condition $EK\in L^2(M\times M)$ is independent of the choice of the 
operator $E$ in $\Psi_{+e}^{\nu}(M\times M)$, and can be expressed as the Sobolev space
condition $K\in H^{\nu}(M\times M)$. 
\end{lem}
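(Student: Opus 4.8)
The plan is to deduce the absolute summability of the Fourier coefficients from the Plancherel identity together with the Cauchy--Schwarz inequality, reducing the key estimate to the convergence criterion \eqref{asymp2} applied on the $2n$-dimensional manifold $M\times M$. First I would record that, since $E$ is positive (hence self-adjoint) and $\{e_\ell^m\}$ is an orthonormal basis of eigenfunctions with $Ee_\ell^m=\lambda_\ell e_\ell^m$, the operator $I+E$ acts diagonally on this basis, so that $\widehat{(I+E)K}(\ell,m)=(1+\lambda_\ell)\widehat{K}(\ell,m)$. Because both $K$ and $EK$ lie in $L^2(M\times M)$, we have $(I+E)K\in L^2(M\times M)$, and the Plancherel identity \eqref{EQ:Plancherel} on $M\times M$ gives
\[
\sum_{\ell=0}^\infty\sum_{m=1}^{d_\ell}(1+\lambda_\ell)^2|\widehat{K}(\ell,m)|^2=\|(I+E)K\|_{L^2(M\times M)}^2<\infty.
\]

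Next I would estimate the target sum by inserting the weights $(1+\lambda_\ell)^{\pm 1}$ and applying Cauchy--Schwarz over the index set $(\ell,m)$:
\[
\sum_{\ell,m}|\widehat{K}(\ell,m)|\le\left(\sum_{\ell,m}(1+\lambda_\ell)^2|\widehat{K}(\ell,m)|^2\right)^{1/2}\left(\sum_{\ell=0}^\infty d_\ell(1+\lambda_\ell)^{-2}\right)^{1/2},
\]
where the second factor arises because, for each fixed $\ell$, there are exactly $d_\ell$ indices $m$ contributing the common weight $(1+\lambda_\ell)^{-2}$. The first factor is finite by the previous display. For the second factor I would invoke the convergence criterion \eqref{asymp2}, now on the manifold $M\times M$, whose dimension is $2n$: the series $\sum_\ell d_\ell(1+\lambda_\ell)^{-q}$ converges if and only if $q>\frac{2n}{\nu}$, so the choice $q=2$ converges precisely when $\nu>n$. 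As this is exactly the hypothesis of the lemma, both factors are finite and $\sum_{\ell,m}|\widehat{K}(\ell,m)|<\infty$. The expansion \eqref{EQ:Kernel-FT} is then simply the $L^2(M\times M)$-Fourier series of $K$, now known to converge absolutely in the sense of coefficients.

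For the independence of the condition on the choice of $E$ and its reformulation as a Sobolev condition, I would appeal to elliptic regularity on the closed manifold $M\times M$. Concretely, Lemma \ref{invellipt} applied with $M$ replaced by $M\times M$ and to two elliptic operators of order $\nu$ shows that $EK\in L^2(M\times M)$ is insensitive to the particular choice of $E\in\Psi_{+e}^\nu(M\times M)$. Moreover, since $H^\nu(M\times M)$ is characterised as the set of distributions $h$ with $(I+E)h\in L^2(M\times M)$ for one (equivalently, any) positive elliptic $E$ of order $\nu$, and since $(I+E)K\in L^2(M\times M)$ is equivalent to $EK\in L^2(M\times M)$ for $K\in L^2(M\times M)$, the hypothesis is exactly $K\in H^\nu(M\times M)$.

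The main point to get right is the bookkeeping of the dimension: everything hinges on applying \eqref{asymp2} with the dimension of $M\times M$ equal to $2n$ rather than $n$, so that the convergence threshold $q>\frac{2n}{\nu}$ turns summability at $q=2$ into precisely the stated condition $\nu>n$. This is also the structural reason the lemma requires $\nu>n$ (rather than $\nu>n/2$, as in the one-factor results such as Corollary \ref{mix3}), and it is the quantitative counterpart of the weaker $C^\nu$-condition with $\nu>n$ that the direct Fourier-coefficient method would yield.
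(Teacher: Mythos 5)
Your proof is correct and follows essentially the same route as the paper: Plancherel on $M\times M$, Cauchy--Schwarz with the weights $(1+\lambda_\ell)^{\pm1}$, and the convergence criterion \eqref{asymp2} applied on the $2n$-dimensional manifold $M\times M$ (giving the threshold $\nu>n$), with independence of $E$ via Lemma \ref{invellipt}. The only cosmetic difference is that you work with $(I+E)K$ and the exact weight $1+\lambda_\ell$, whereas the paper sets $h=EK$ and absorbs the ratio $(1+\lambda_\ell)/\lambda_\ell$ into a constant; your variant is marginally cleaner but not a different argument.
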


\begin{proof}
We define $h(x,y):=EK(x,y)$, which is in $L^2(M\times M)$. 
With respect to the orthonormal basis $\{e_{\ell}^m\}_{\ell\geq 0}^{1\leq m\leq d_{\ell}}$ of $L^2(M\times M)$ the kernel $K(x,y)$ can be written as
\eqref{EQ:Kernel-FT}.
At the same time, the Fourier coefficients of $h$ satisfy, by the Plancherel formula
\eqref{EQ:Plancherel},
\beq \sum\limits_{\ell=0}^{\infty}\sum\limits_{m=1}^{d_{\ell}}|\widehat{h}(\ell,m)|^2= \|h\|_{L^2(M\times M)}^2,\label{ineq66}\eq
and
\begin{align*}\widehat{h}(\ell,m)={{\widehat{EK}}}(\ell,m)
=\lambda_{\ell}\widehat{K}(\ell,m),
\end{align*}
for $0\leq \ell <\infty$ and $1\leq m\leq d_{\ell}.$

For the Fourier coefficients of $K$, by the Cauchy-Schwarz inequality, (\ref{ineq66}) and the inequality (\ref{asymp2}) applied to ${E}$ on $M\times M $ we obtain  

\begin{multline*}
\sum\limits_{\ell=0}^{\infty}\sum\limits_{m=1}^{d_{\ell}}|\widehat{K}(\ell,m)|
\leq C\sum\limits_{\ell=0}^{\infty}\sum\limits_{m=1}^{d_{\ell}}\lambda_{\ell}\frac{|\widehat{K}(\ell,m)|}{1+\lambda_{\ell}}
=C\sum\limits_{\ell=0}^{\infty}\sum\limits_{m=1}^{d_{\ell}}\frac{|\widehat{h}(\ell,m)|}{1+\lambda_{\ell}}\\
\leq  C\left(\sum\limits_{\ell=0}^{\infty}\sum\limits_{m=1}^{d_{\ell}}|\widehat{h}(\ell,m)|^2\right)^\half\left(\sum\limits_{\ell}d_{\ell}(1+\lambda_{\ell})^{-2}\right)^\half
\leq  C\|h\|_{L^2},
\end{multline*}
where for the application of (\ref{asymp2}), we note that the convergence of the series $\sum\limits_{\ell}d_{\ell}(1+\lambda_{\ell})^{-2}$ on $M\times M$ with $q=2$ 
 is equivalent to the condition $\nu>n$. The independence of the choice of an elliptic positive  pseudo-differential operator of order $\nu$ is an immediate consequence of Lemma \ref{invellipt} for the manifold $M\times M$.
\end{proof}

\section{Schatten classes on compact Lie groups} 
\label{SEC:Lie-groups}

In this section we consider the conditions for Schatten classes for operators on compact Lie groups.
We show that the conditions on the kernel can be also formulated in terms of hypoelliptic
operators. This is done by combining the factorisation method used in the previous 
sections with recent results \cite{dr13:schatten} by the authors on characterisation of invariant operators in Schatten classes on compact Lie groups. 
We start by describing the basic concepts we will require for this setting. 

Let $G$ be a  compact Lie group of dimension $n$ with 
the normalised Haar measure $dx$. 
Let $\widehat{G}$ denote the set of equivalence classes of continuous irreducible unitary 
representations of $G$. Since $G$ is compact, the set $\widehat{G}$ is discrete.  
For $[\xi]\in \widehat{G}$, by choosing a basis in the representation space of $\xi$, we can view 
$\xi$ as a matrix-valued function $\xi:G\rightarrow \ce^{d_{\xi}\times d_{\xi}}$, where 
$d_{\xi}$ is the dimension of the representation space of $\xi$. 
By the Peter-Weyl theorem the collection
$$
\left\{ \sqrt{d_\xi}\,\xi_{ij}: \; 1\leq i,j\leq d_\xi,\; [\xi]\in\Gh \right\}
$$
is the orthonormal basis of $L^2(G)$.
If $f\in L^1(G)$ we define its global Fourier transform at $\xi$ by 
 \[\widehat{f}(\xi):=\int_{G}f(x)\xi(x)^*dx.\]  
If $\xi$ is a matrix representation, we have $\widehat{f}(\xi)\in\ce^{d_{\xi}\times d_{\xi}} $. The Fourier inversion formula is a consequence of the Peter-Weyl theorem, and we have
\beq 
f(x)=\sum\limits_{[\xi]\in \widehat{G}}d_{\xi} \Tr(\xi(x)\widehat{f}(\xi)).
\eq

For each $[\xi]\in \widehat{G}$, the matrix elements of $\xi$ are the eigenfunctions for the Laplacian
 $\mathcal{L}_G$ 
(or the Casimir element of the universal enveloping algebra), with the same eigenvalues which we denote by 
$-\lambda^2_{[\xi]}$, so that we have
\begin{equation}\label{EQ:Lap-lambda}
-\mathcal{L}_G\xi_{ij}(x)=\lambda^2_{[\xi]}\xi_{ij}(x)\qquad\textrm{ for all } 1\leq i,j\leq d_{\xi}.
\end{equation} 
The weight for measuring the decay or growth of Fourier coefficients in this setting is 
$\jp{\xi}:=(1+\lambda^2_{[\xi]})^{\half}$, the eigenvalues of the 
(positive) elliptic first-order pseudo-differential operator 
$(I-\mathcal{L}_G)^{\half}$.
The Parseval identity takes the form 
$$
\|f\|_{L^2(G)}= \left(\sum\limits_{[\xi]\in \widehat{G}}d_{\xi}\|\widehat{f}(\xi)\|^2_{\HS}\right)^{\half},\quad
\textrm{ where }
\|\widehat{f}(\xi)\|^2_{\HS}=\Tr(\widehat{f}(\xi)\widehat{f}(\xi)^*),$$
which gives the norm on 
$\ell^2(\widehat{G})$. 

For a linear continuous operator $A$ from $C^{\infty}(G)$ to $\mathcal{D}'(G) $ 
we define  its {\em matrix-valued symbol} $\sigma_A(x,\xi)\in\cdxi$ by 
\begin{equation}\label{EQ:A-symbol}
\sigma_A(x,\xi):=\xi(x)^*(A\xi)(x)\in\cdxi.
\end{equation}
Then one has (\cite{rt:book}, \cite{rt:groups}) the global quantization
\begin{equation}\label{EQ:A-quant}
Af(x)=\sum\limits_{[\xi]\in \widehat{G}}d_{\xi}\Tr(\xi(x)\sigma_A(x,\xi)\widehat{f}(\xi))
\end{equation}
in the sense of distributions, and the sum is independent of the choice of a representation $\xi$ from each 
equivalence class 
$[\xi]\in \widehat{G}$. If $A$ is a linear continuous operator from $C^{\infty}(G)$ to $C^{\infty}(G)$,
the series \eqref{EQ:A-quant} is absolutely convergent and can be interpreted in the pointwise
sense. We will also write $A=\Op(\sigma_A)$ for the operator $A$ given by
the formula \eqref{EQ:A-quant}.
We refer to \cite{rt:book, rt:groups} for the consistent development of this quantization
and the corresponding symbolic calculus.

%\begin{rem} In particular, if $G=\T$ we  have $n=1$ and Sugiura's theorem is absorbed taking into account that, if $k\in C^2(\TT)$ then $\lapp^{\frac{q}{2}}k\in C(\TT)\subset L^2(G\times G)$ with $q=2$. 
%\end{rem}

In the subsequent Part II of this paper, we will relate the Fourier 
and symbolic analysis on general closed
manifolds to those on compact Lie groups. So, now we will only
concentrate on showing that the conditions
for Schatten classes can be also formulated by examining the regularity of the kernel under the
action of non-elliptic but hypoelliptic operators.

Instead of Proposition \ref{PROP:elliptic} for elliptic operators as the starting point, here we will use its
analogue for hypoelliptic operators established in 
\cite{dr13:schatten}, for example its analogue for sub-Laplacians on
compact Lie groups $G$.
Let us write
\[\lap=X_1^2+\cdots +X_{n-1}^2+X_{n}^2,\]  
\begin{equation}\label{EQ:def-subL}
\subll=X_1^2+\cdots +X_{n-1}^2,
\end{equation}
for a basis $X_1,\ldots,X_n$ of left-invariant
vector fields on the Lie algebra $\mathfrak g$ of $G$, 
assuming that the span of the first commutators of $X_1,\ldots,X_{n-1}$ contains $X_n$.
Then it was shown in \cite{dr13:schatten} that 
\begin{equation}\label{EQ:subL-Schatten}
0<r<\infty \textrm{ and } \alpha r>2n 
\Longrightarrow (I-\mathcal L_{sub})^{-\alpha/2}\in S_r(L^{2}(G)).
\end{equation}
Here we can note that the powers of the hypoelliptic positive pseudo-differential operator 
$I-\mathcal L_{sub}$ are well-defined. There is a general theory, 
see e.g. \cite{Kumano-go-Tsutsumi:complex-powers-hypo-OJM-1973},
\cite{Hayakawa-Kumano-go:powers-PJA-1971},
or more recent results and references in \cite{Buzano-Nicola:powers-hypo-JFA-2007}.
However, if we observe that the matrix symbol of the sub-Laplacian
(as well as the symbol of the operator $\mathcal H_{\gamma}$ is the sequel) are diagonal,
a complex power of such operator may be defined by the quantization formula
\eqref{EQ:A-quant} using the matrix symbol being the corresponding
complex power of the diagonal symbol of the operator. 
For left-invariant operators with diagonal matrix symbols
(such as $\mathcal L_{sub}$ or $\mathcal H_{\gamma}$)
all such approaches yield the same 
operators (see e.g. \cite{Ruzhansky-Wirth:functional-calculus} for more details).

Consequently, arguing in the same way as in the proof of 
Theorem \ref{ext322} we obtain:

\begin{cor} \label{COR:sub-Lap-G} 
Let $G$ be a compact Lie group of dimension $n$, and let $\subll$ be a
sub-Laplacian as in \eqref{EQ:def-subL}.
Let $K\in L^2(M\times M)$ be such that 
$(I-\subll)_x^{\mu_1/2}(I-\subll)_y^{\mu_2/2}K\in L^2(G\times G)$ for some $\mu_1, \mu_2\geq 0$. 
Then the integral operator $T$ on 
 $L^2(G)$, defined by
 \[(Tf)(x)=\int_{G} K(x,y)f(y)dy,\]
is in the Schatten classes $S_r(L^2(G))$ for $r>\frac{2n}{n+(\mu_1+\mu_2)}$.
\end{cor}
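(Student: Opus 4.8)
The plan is to mimic the proof of Theorem \ref{ext322} verbatim, replacing the elliptic operator $E=(I+\Delta_M)^{1/2}$ with the hypoelliptic operator $(I-\subll)^{1/2}$ and using the hypoelliptic Schatten estimate \eqref{EQ:subL-Schatten} in place of Proposition \ref{PROP:elliptic}. The only structural difference is the exponent arithmetic, since \eqref{EQ:subL-Schatten} gives the membership $(I-\subll)^{-\alpha/2}\in S_r$ under the condition $\alpha r>2n$, rather than the elliptic condition $\alpha>\frac{n}{r\nu}$ with $\nu=1$. Writing $E:=(I-\subll)^{1/2}$, the hypoelliptic estimate reads $E^{-\mu}=(I-\subll)^{-\mu/2}\in S_{p}(L^2(G))$ whenever $\mu p>2n$, i.e.\ $p>\frac{2n}{\mu}$; this is the sole ingredient that changes the final index.

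First I would treat the generic case $\mu_1,\mu_2>0$. As in Theorem \ref{ext322}, I let $B(x,y)$ be the kernel of $E_y^{-\mu_2}\in S_{p_2}(L^2(G))$ (valid for $p_2>\frac{2n}{\mu_2}$), and for $f\in L^2(G)$ set $g(y):=\int_G B(y,z)f(z)\,dz$, so that $E_y^{\mu_2}g=f$. The hypothesis $(I-\subll)_x^{\mu_1/2}(I-\subll)_y^{\mu_2/2}K\in L^2(G\times G)$ is exactly $E_x^{\mu_1}E_y^{\mu_2}K\in L^2(G\times G)$, which justifies the same chain of scalar-product manipulations used there (the operators $E_x$ and $E_y$ commute, acting on disjoint variables). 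This yields the factorisation
\[
E_x^{\mu_1}Tf(x)=\int_G A(x,y)\Bigl(\int_G B(y,z)f(z)\,dz\Bigr)dy,\qquad A(x,y):=E_x^{\mu_1}E_y^{\mu_2}K(x,y),
\]
with $A\in S_2$ and $B\in S_{p_2}$, hence by the multiplication property \eqref{inn} we get $E_x^{\mu_1}T\in S_t$ with $\frac1t=\frac12+\frac1{p_2}$. Since $E_x^{-\mu_1}\in S_{p_1}$ for $p_1>\frac{2n}{\mu_1}$, composing gives $T=E_x^{-\mu_1}(E_x^{\mu_1}T)\in S_r$ with $\frac1r=\frac1{p_1}+\frac12+\frac1{p_2}$.

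It then remains to optimise the index. Substituting the admissible thresholds $\frac1{p_1}<\frac{\mu_1}{2n}$ and $\frac1{p_2}<\frac{\mu_2}{2n}$ into $\frac1r=\frac12+\frac1{p_1}+\frac1{p_2}$ gives $\frac1r<\frac12+\frac{\mu_1+\mu_2}{2n}=\frac{n+(\mu_1+\mu_2)}{2n}$, which is precisely the stated range $r>\frac{2n}{n+(\mu_1+\mu_2)}$. The boundary cases are handled exactly as in Theorem \ref{ext322}: for $\mu_1=0$, $\mu_2>0$ one simply drops the factor $E_x^{\mu_1}$, and for $\mu_1>0$, $\mu_2=0$ one passes to the adjoint $T^*$ and uses $\|T\|_{S_r}=\|T^*\|_{S_r}$. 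I do not anticipate a serious obstacle, since the argument is a transplant of the elliptic proof; the only point requiring care is that powers of $I-\subll$ are well-defined and that \eqref{EQ:subL-Schatten} applies with the correct exponent bookkeeping, both of which are secured by the discussion of diagonal matrix symbols and the functional calculus recalled before the statement.
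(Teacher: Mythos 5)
Your proposal is correct and follows the same route as the paper: the authors likewise prove this corollary by rerunning the factorisation argument of Theorem \ref{ext322} with the sub-Laplacian in place of the elliptic operator, invoking \eqref{EQ:subL-Schatten} to get $(I-\subll)^{-\mu_j/2}\in S_{p_j}$ for $\mu_j p_j>2n$ and then reading off $r>\frac{2n}{n+(\mu_1+\mu_2)}$ from $\frac1r=\frac12+\frac1{p_1}+\frac1{p_2}$. Your exponent bookkeeping matches theirs exactly.
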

\begin{proof}
We argue similar to the proof of Theorem \ref{ext322}.
In particular, we know from \eqref{EQ:subL-Schatten} that 
$(I-\mathcal L_{sub})^{-\mu_j/2}\in S_{p_j}$ for $\mu_j p_j>2n\,\,(j=1,2)$.
From the relation  $\frac{1}{r}=\frac{1}{2}+\frac{1}{p_1}+\frac{1}{p_2}$ and $p_j>\frac{2n}{\mu_j}$, we
get that under the condition 
$r>\frac{2n}{n+(\mu_1+\mu_2)}$ the operator $T$ belongs to the Schatten class $S_{r}$ on
$L^{2}(G)$.
\end{proof}
%\begin{rem}  A classical problem in analysis has been the study of powers of different types of operators. 
% The powers of elliptic pseudo-differential operators on compact manifolds was introduced by Robert T. Seeley (\cite{Seeley:complexp}). Complex powers of more general operators containing the hypoelliptic case have been studied by  K. Hayakawa, H. Kumano-go and C. Tsutsumi (\cite{kgoH}, \cite{kgo1}, \cite{kgo2}). More recently  
 %\end{rem}

As it was noted in \cite{dr13:schatten}, the implication
\eqref{EQ:subL-Schatten} can be improved for particular groups
using their particular structure. For example, for the compact Lie group
$\SU2$ we have, for three left-invariant vector fields $X,Y,Z$ that
$[X,Y]=Z$, and so with $\mathcal L_{sub}=X^{2}+Y^{2}$ we have
\begin{equation}\label{EQ:subL-Schatten-SU2}
0<r<\infty \textrm{ and } \alpha r>4 
\Longrightarrow (I-\mathcal L_{sub})^{-\alpha/2}\in S_r(L^{2}(\SU2)).
\end{equation}
The same is true for $\mathbb S^{3}\simeq \SU2$ considered as the compact
Lie group with the quaternionic product.
Using this instead of \eqref{EQ:subL-Schatten}, we get a refinement of
Corollary \ref{COR:sub-Lap-G} in the setting of $\mathbb S^{3}\simeq \SU2$:

\begin{cor} \label{COR:sub-Lap-SU2} 
Let $K\in L^2(\mathbb S^{3}\times \mathbb S^{3})$ be such that 
we have $(1-\subll)_x^{\mu_1/2}(1-\subll)_y^{\mu_2/2}K\in L^2(\mathbb S^{3}\times \mathbb S^{3})$ for some $\mu_1, \mu_2\geq 0$. 
Then the integral operator $T$ on 
 $L^2(\mathbb S^{3})$, defined by
 \[(Tf)(x)=\int_{\mathbb S^{3}} K(x,y)f(y)dy,\]
is in the Schatten classes $S_r(L^2(\mathbb S^{3}))$ for $r>\frac{4}{2+\mu_1+\mu_2}$.
The same result holds on compact Lie groups $\SU2$ and $\SO3$.
\end{cor}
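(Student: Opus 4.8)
The plan is to mimic the proof of Corollary \ref{COR:sub-Lap-G} essentially verbatim, replacing only the input from \eqref{EQ:subL-Schatten} with the sharper Schatten bound \eqref{EQ:subL-Schatten-SU2} valid on $\SU2\simeq \mathbb S^3$. The entire argument is a factorisation of the operator $T$ using the multiplication property \eqref{inn} of Schatten ideals, so the only quantitative ingredient that changes is the threshold in the hypothesis on the powers of the sub-Laplacian.

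First I would record, from \eqref{EQ:subL-Schatten-SU2}, that $(I-\subll)^{-\mu_j/2}\in S_{p_j}(L^2(\mathbb S^3))$ precisely when $\mu_j p_j>4$, i.e. when $p_j>\frac{4}{\mu_j}$, for $j=1,2$. This replaces the condition $p_j>\frac{2n}{\mu_j}$ that appears in the proof of Corollary \ref{COR:sub-Lap-G}; here the effective ``dimension'' in the estimate is halved from $2n=6$ to $4$ because of the special commutator structure $[X,Y]=Z$ on $\SU2$. Next, exactly as in the proof of Theorem \ref{ext322}, I would factorise. Writing $A(x,y):=(1-\subll)_x^{\mu_1/2}(1-\subll)_y^{\mu_2/2}K(x,y)$, which lies in $L^2(\mathbb S^3\times\mathbb S^3)$ by hypothesis, and letting $B$ be the kernel of $(1-\subll)_y^{-\mu_2/2}$, the same computation as in Theorem \ref{ext322} shows
\[
(1-\subll)_x^{\mu_1/2}Tf(x)=\int_{\mathbb S^3}A(x,y)\left(\int_{\mathbb S^3}B(y,z)f(z)\,dz\right)dy,
\]
so that $(1-\subll)_x^{\mu_1/2}T$ factors as a product of an $S_2$ operator (kernel $A\in L^2$) and an $S_{p_2}$ operator (the operator $(1-\subll)^{-\mu_2/2}$), giving $(1-\subll)_x^{\mu_1/2}T\in S_t$ with $\frac1t=\frac12+\frac1{p_2}$ by \eqref{inn}.

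Then I would recover $T$ itself by writing $T=(1-\subll)_x^{-\mu_1/2}\,(1-\subll)_x^{\mu_1/2}T$ and applying the multiplication property once more, with $(1-\subll)_x^{-\mu_1/2}\in S_{p_1}$. This yields $T\in S_r$ with $\frac1r=\frac1{p_1}+\frac1t=\frac12+\frac1{p_1}+\frac1{p_2}$. Finally, substituting the sharp thresholds $p_1>\frac{4}{\mu_1}$ and $p_2>\frac{4}{\mu_2}$ and optimising gives
\[
\frac1r<\frac12+\frac{\mu_1}{4}+\frac{\mu_2}{4}=\frac{2+\mu_1+\mu_2}{4},
\]
which is exactly the claimed range $r>\frac{4}{2+\mu_1+\mu_2}$. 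The cases where one of $\mu_1,\mu_2$ vanishes are handled as in Theorem \ref{ext322}, the case $\mu_1=0$ by simply omitting the operator $(1-\subll)_x^{\mu_1/2}$, and the case $\mu_2=0$ by passing to the adjoint $T^*$ and using $\|T\|_{S_r}=\|T^*\|_{S_r}$.

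I do not anticipate a genuine obstacle here, since the structural argument is identical to that of Corollary \ref{COR:sub-Lap-G}; the proof is really a bookkeeping substitution of the improved constant $4$ for $2n$. The only point that genuinely requires the ambient group to be $\SU2$ (or $\SO3$, or $\mathbb S^3$ with its quaternionic group structure) is the sharpened Schatten estimate \eqref{EQ:subL-Schatten-SU2}, whose validity rests on the single-bracket-generating condition $[X,Y]=Z$; once that estimate is taken as given, the rest of the proof is automatic. The statement for $\SO3$ follows because $\SO3$ shares the same Lie algebra $\mathfrak{su}(2)$ and hence the same commutator relation and the same Schatten bound for its sub-Laplacian, so the identical factorisation applies.
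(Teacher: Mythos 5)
Your proposal is correct and follows essentially the same route as the paper: the paper's own proof of Corollary \ref{COR:sub-Lap-SU2} likewise substitutes the sharpened estimate \eqref{EQ:subL-Schatten-SU2} (so that $(I-\mathcal L_{sub})^{-\mu_j/2}\in S_{p_j}$ for $p_j>\frac{4}{\mu_j}$) into the factorisation scheme of Theorem \ref{ext322}, arriving at $\frac1r=\frac12+\frac1{p_1}+\frac1{p_2}$ and hence $r>\frac{4}{2+\mu_1+\mu_2}$. Your additional remarks on the edge cases $\mu_j=0$ and on $\SO3$ are consistent with how the paper handles them.
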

\begin{proof}
Using \eqref{EQ:subL-Schatten-SU2} with $\alpha=\mu_j$, the assumption
$\mu_j p_j>4$ implies that 
$(I-\mathcal L_{sub})^{-\mu_j/2}\in S_{p_j}$ for $p_j>\frac{4}{\mu_j} (j=1,2)$.
From $\frac{1}{r}=\frac{1}{2}+\frac{1}{p_1}+\frac{1}{p_2}$ and $p_j>\frac{4}{\mu_j}$ we obtain the
condition $r>\frac{4}{2+\mu_1+\mu_2}$.
\end{proof}

We now show that instead of the sub-Laplacian other globally hypoelliptic operators
can be used, also those that are not necessarily covered by H\"ormander's sum of
the squares theorem. Instead of $\SU2$, for a change, we will formulate this for the group
$\SO3$ noting that, however, the same conclusion holds also on
$\SU2\simeq \mathbb S^{3}$.
To formulate and motivate the result, we first briefly introduce 
some more notation concerning the group $G=\SO3$ of the $3\times 3$ 
real orthogonal matrices 
of determinant one. For the details of the representation theory and the global 
quantization of operators on $\SO3$ we refer the reader to \cite[Chapter 12]{rt:book}. 
The unitary dual in this case of $G=\SO3$ can be identified as $\widehat{G}\simeq \ene_0=\ene\cup\{0\}$, so that
$$
\widehat{\SO3}=\{ [t^\ell]: t^\ell\in\C^{(2\ell+1)\times (2\ell+1)},  \ell\in \N_0\}.
$$
The dimension of each $t^\ell$ is $d_{t^\ell}=2\ell+1$.

As in the case of $\SU2$, let us fix three left-invariant vector fields $X, Y, Z$ on $\SO3$ 
associated to the derivatives with
respect to the Euler angles, 
so that we also have $[X,Y]=Z$,
see \cite{rt:book} or \cite{rt:groups} for the detailed expressions.

We will consider an {\em example} of an operator (on $\SO3$)
which is not covered by H\"ormander's sum of squares theorem.
Namely, we consider the following family of `Schr{\"o}dinger' differential operators
\[\mathcal H_{\gamma}=iZ-\gamma(X^2+Y^2),\]
for a parameter $0<\gamma<\infty.$ For $\gamma=1$ it 
was shown in \cite{Ruzhansky-Turunen-Wirth:arxiv} that
$\mathcal H_1+cI$ is globally hypoelliptic if and only if
$0\not\in\{c+\ell(\ell+1)-m(m+1): \ell\in\N, m\in\mathbb Z, |m|\leq\ell\}.$ 
%
%The matrix-symbol of $I+\mathcal H_{\gamma}$ is given by 
%\begin{equation}\label{EQ:SO3-schor}
%\sigma_{I+\mathcal H_{\gamma}}(\ell)_{mn}=(1+m-\gamma m^2+\gamma\ell(\ell+1))\delta_{mn},\quad m,n\in\mathbb Z,\;
%-\ell\leq m,n\leq\ell,
%\end{equation}
%where as before $\delta_{mn}$ is the Kronecker delta, and we let $m,n$ run from
%$-\ell$ to $\ell$ rather than from $0$ to $2\ell+1$.
%Similarly to the case of $\gamma=1$ above, for $\gamma\geq 1$ one shows that the second order differential
%operator $I+\mathcal H_{\gamma}$ is globally hypoelliptic and its powers are
%pseudo-differential operators with symbols
%$$\sigma_{(I+\mathcal H_{\gamma})^{-\alpha/2}}(\ell)_{mn}=(1+m-\gamma m^2+\gamma\ell(\ell+1))^{-\alpha/2}\delta_{mn},$$
%with $m,n\in\mathbb Z,\;
%-\ell\leq m,n\leq\ell$.
%
It has been also shown in \cite[Section 4]{dr13:schatten} that, if $\gamma > 1$, then 
$I+\mathcal H_{\gamma}$ is globally hypoelliptic, and
$$(I+\mathcal H_{\gamma})^{-\alpha/2}\in S_p \textrm{ if and only if } \alpha p>4.$$
 
As a consequence of this and following the argument in the proof of Theorem \ref{ext322} 
with $I+\mathcal H_{\gamma}$ instead of $E=\lapm$ for the manifold $M=\SO3$, as well
as Corollary \ref{COR:sub-Lap-SU2}, we obtain:

\begin{cor} \label{ext322so3}  
Let $\gamma >1$. Let $K\in L^2(\SO3\times\SO3)$ be such that $(I+\mathcal H_{\gamma})_x ^{\mu_1/2}
(I+\mathcal H_{\gamma})_y ^{\mu_2/2}K\in L^2(\SO3\times\SO3)$ for some $\mu_1, \mu_2\geq 0$. 
Then the integral operator $T$ on 
 $L^2(\SO3)$ defined by
 \[(Tf)(x)=\int_{\SO3}K(x,y)f(y)dy,\]
is in $S_r$ on $L^{2}(\SO3)$ for $r>\frac{4}{2+\mu_1+\mu_2}$. 
\end{cor}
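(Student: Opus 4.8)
The plan is to mirror the factorisation argument used in the proof of Theorem \ref{ext322}, but replacing the elliptic operator $E$ by the hypoelliptic operator $I+\mathcal H_{\gamma}$ on $\SO3$ and invoking the sharp Schatten estimate for its powers instead of Proposition \ref{PROP:elliptic}. The essential input is the result quoted from \cite[Section 4]{dr13:schatten}, namely that for $\gamma>1$ the operator $I+\mathcal H_{\gamma}$ is globally hypoelliptic and $(I+\mathcal H_{\gamma})^{-\alpha/2}\in S_p$ if and only if $\alpha p>4$. This plays exactly the role that \eqref{EQ:subL-Schatten-SU2} plays in the proof of Corollary \ref{COR:sub-Lap-SU2}, with the same threshold $4$, so the final index $r$ should come out identical.

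First I would set, for $j=1,2$, the operator $(I+\mathcal H_{\gamma})^{-\mu_j/2}$ and observe from the cited equivalence that $(I+\mathcal H_{\gamma})^{-\mu_j/2}\in S_{p_j}$ precisely when $\mu_j p_j>4$, i.e.\ $p_j>\frac{4}{\mu_j}$. Writing $A(x,y):=(I+\mathcal H_{\gamma})_x^{\mu_1/2}(I+\mathcal H_{\gamma})_y^{\mu_2/2}K(x,y)$, the hypothesis says exactly that $A\in L^2(\SO3\times\SO3)$, so the integral operator with kernel $A$ is Hilbert-Schmidt, hence in $S_2$. Next I would run the same chain of scalar-product manipulations as in part $(i)$ of the proof of Theorem \ref{ext322}, using that the operators in $x$ and in $y$ commute (they act on different variables), to factorise $(I+\mathcal H_{\gamma})_x^{\mu_1/2}T$ as the composition of the $S_2$ operator with kernel $A$ and the operator $(I+\mathcal H_{\gamma})_y^{-\mu_2/2}\in S_{p_2}$. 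By the multiplication property \eqref{inn} this gives $(I+\mathcal H_{\gamma})_x^{\mu_1/2}T\in S_t$ with $\frac1t=\frac12+\frac1{p_2}$, and composing on the left with $(I+\mathcal H_{\gamma})_x^{-\mu_1/2}\in S_{p_1}$ then yields $T\in S_r$ with $\frac1r=\frac12+\frac1{p_1}+\frac1{p_2}$. Optimising over admissible $p_1,p_2$ under $p_j>\frac4{\mu_j}$ produces the stated range $r>\frac{4}{2+\mu_1+\mu_2}$, and the degenerate cases $\mu_1=0$ or $\mu_2=0$ are handled as in parts $(ii)$ and $(iii)$ of that proof (the latter by passing to the adjoint and using $\|T\|_{S_r}=\|T^*\|_{S_r}$).

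The step I expect to require the most care is the justification of the formal manipulations in the factorisation, specifically moving the fractional power $(I+\mathcal H_{\gamma})_y^{\mu_2/2}$ across the inner product and defining $g$ via the inverse operator $(I+\mathcal H_{\gamma})_y^{-\mu_2/2}$ applied to $f$. Since $\mathcal H_{\gamma}$ is only hypoelliptic and not elliptic, one cannot literally invoke the elliptic-regularity description of Sobolev spaces; instead I would rely on the fact, noted in the excerpt, that the matrix symbol of $\mathcal H_{\gamma}$ is diagonal, so that its complex powers are unambiguously defined by the quantization formula \eqref{EQ:A-quant} and behave algebraically like a scalar multiplier in the Fourier variable. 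This diagonal structure guarantees that $(I+\mathcal H_{\gamma})_y^{\mu_2/2}$ and $(I+\mathcal H_{\gamma})_y^{-\mu_2/2}$ are genuine inverses on the relevant spaces and that the adjoint computation in the inner product is valid, since for the $S_r$ quasi-norm identity $\|T\|_{S_r}=\|T^*\|_{S_r}$ one also needs $\overline{(I+\mathcal H_{\gamma})}$ to be well-behaved; here the self-adjointness properties of the diagonal symbol again make this routine. Once this algebraic bookkeeping is in place the rest is a direct transcription of the earlier argument, and no genuinely new analytic obstacle arises beyond the hypoellipticity estimate already supplied by \cite{dr13:schatten}.
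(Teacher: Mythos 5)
Your proposal is correct and follows essentially the same route as the paper: the paper also obtains this corollary by substituting $I+\mathcal H_{\gamma}$ for the elliptic operator in the factorisation argument of Theorem \ref{ext322}, invoking the result from \cite[Section 4]{dr13:schatten} that $(I+\mathcal H_{\gamma})^{-\alpha/2}\in S_p$ if and only if $\alpha p>4$, and then computing $\frac1r=\frac12+\frac1{p_1}+\frac1{p_2}$ with $p_j>\frac{4}{\mu_j}$ to arrive at $r>\frac{4}{2+\mu_1+\mu_2}$. Your additional care about defining the fractional powers via the diagonal matrix symbol matches the remark the paper makes just before Corollary \ref{COR:sub-Lap-G}.
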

The same conclusion holds on $\SU2\simeq\mathbb S^3$. Again, if
$\mu_1=\mu_2=0$, the results have a trivial strengthening to include the case $r=2$.

%\begin{lem} If $G$ is a compact simple group then 
%\[\lapp=(\mathcal{L}_G)_x+(\mathcal{L}_G)_y.\]
%\end{lem}
%\begin{proof} We will show that $(\mathcal{L}_G)_x+(\mathcal{L}_G)_y$ is a laplacian on $G\times G$. In fact, it is a second order differential operator, self-adjoint**** for the ellipticity we note that the symbol of $(\mathcal{L}_G)_x$ is and the symbol of $(\mathcal{L}_G)_y$ is 

%Let us denote by  $\xi$ the dual variable  with respect to $x$ and by $\eta$ the dual variable with respect to $y$. 
%By looking $(\mathcal{L}_G)_x+(\mathcal{L}_G)_y$ in local coordinates we see that its symbol is bounded from below by 
%$C(|\xi|^2+|\eta|^2)$ which is equivalent to $C|(\xi,\eta)|^2$. Hence, 

%Hence by the uniqueness of laplacians on simple groups we get result.
%\end{proof}

%\bibliographystyle{alphaabbr}

%\bibliography{bib-Delgado}
%\end{document}

%\bibliographystyle{alphaabbr}

%\bibliography{bib-Delgado-13-10-17}

%\end{document}

%
%
%

\end{document}